\documentclass[11pt]{amsart}

\usepackage{babel}
\usepackage{amsmath,amsthm}
\usepackage[T1]{fontenc}
\usepackage{amssymb}

\usepackage[all]{xy}

\theoremstyle{definition} 
\newtheorem{thm}{Theorem}[section]
\theoremstyle{definition}
\newtheorem{const}[thm]{Construction}
\newtheorem{cor}[thm]{Corollary}
\newtheorem{lem}[thm]{Lemma}
\newtheorem*{lem*}{Lemma}
\newtheorem*{defn*}{Definition}
\newtheorem{prop}[thm]{Proposition}
\theoremstyle{definition}
\newtheorem{defn}[thm]{Definition}
\theoremstyle{remark}
\newtheorem{rem}[thm]{Remark}
\newtheorem*{rem*}{Remark}
\newtheorem*{thm*}{Theorem}

\newtheorem{ejem}[thm]{Example}
\newtheorem{claim}{Claim}
\numberwithin{equation}{section}
\newcommand{\OO}{{\mathcal O}}
\newcommand{\A}{{\mathcal A}}
\newcommand{\U}{{\mathcal U}}
\newcommand{\PP}{{\mathcal P}}
\newcommand{\X}{{\mathcal X}}
\newcommand{\op}{{\mathrm{op}}}
\newcommand{\qc}{{\mathrm{qc}}}

\newcommand{\Y}{{\mathcal Y}}
\newcommand{\M}{{\mathcal M}}

\newcommand{\C}{{\mathfrak C}}
\newcommand{\x}{{\mathbf{x}}}

\DeclareMathOperator{\Qcoh}{{\mathbf{Qcoh}}}
\DeclareMathOperator{\Coh}{{\mathbf{Coh}}}
\DeclareMathOperator{\Spec}{\mathrm{Spec}}

\DeclareMathOperator{\colim}{\mathrm{colim}}
\DeclareMathOperator{\data}{{\text{-}\mathbf{data}}}

\newcommand{\p}{{\mathfrak p}}
\newcommand{\q}{{\mathfrak q}}

\DeclareMathOperator{\Hom}{\mathrm{Hom}}
\newcommand{\Cyl}{{\mathrm{Cyl}}}

\begin{document}
\title{Finiteness of cohomology for pro-locally proper maps}
\author{Javier Sánchez González}
\address{Departamento de Matem\'{a}ticas, Instituto de F\'isica Fundamental y Matem\'aticas, Universidad de Salamanca,
Plaza de la Merced 1-4, 37008 Salamanca, Spain}
\email{javier14sg@usal.es}
\subjclass[2010]{14A20, 14A15, 55N30, 13F30, 06A11}
\keywords{poset, schematic space, proper morphism, coherent cohomology, valuation ring}
\thanks {The author was supported by Grant PID2021-128665NB-I00 funded by MCIN/AEI/ 10.13039/501100011033}

\maketitle
\begin{abstract}
We introduce a notion of proper morphism for schematic finite spaces and prove the analogue of Grothendieck's finiteness theorem for it by means of the classic result for schemes and general descent arguments. This result also generalizes the class of morphisms of schemes for which the conclusion of the aforementioned Theorem holds. The key is giving a weaker definition of locally finitely presented morphisms.
\end{abstract}

\section{Introduction}

Schematic spaces are---finite---ringed posets that behave like qc-qs schemes with respect to their categories of quasi-coherent sheaves, they were first introduced in \cite{fernando schemes} and further studied in \cite{paper grupo etale, paper vankampen, fernando afines, fernando dualidad, notas de pedro}. The basic example comes from the construction of finite models: given a qc-qs scheme $S$ and a choice of a "locally affine" finite open cover $\{U_i\}$, one can define a finite poset $X=S/\sim$ with $s\sim s'$ iff $\cap_{s\in U_i}U_i=\cap_{s'\in U_i}U_i$ and $[s]\leq [s']$ iff $\cap_{s'\in U_i}U_i\subseteq \cap_{s\in U_i}U_i$ and a morphism of ringed spaces
\begin{align*}
\pi\colon S\to (X, \pi_*\OO_S)
\end{align*}
such that $(\pi^*\dashv \pi_*)\colon\Qcoh(S)\overset{\sim}{\to}\Qcoh(X)$ is an equivalence of abelian categories. Since $\Qcoh(S)$ determines all properties of $S$ and $\Qcoh(X)$ is easier to describe---a quasi-coherent module on a ringed poset is defined by a collection of modules at each stalk compatible with the base changes given by the partial order---, these objects are interesting to us.

In general, schematic spaces define a---non full---subcategory of ringed posets that we denote $\mathbf{SchFin}$. This subcategory is larger than the category of schemes in two senses: for any schematic space $X$ we can define a locally ringed space $\Spec(X)=\colim_{x\in X}\Spec(\OO_{X, x})$ such that, if $X$ is a finite model of some scheme $S$, there is an isomorphism $S\simeq \Spec(X)$. It happens that 1) there are schematic spaces whose spectrum is not a scheme, but still has meaningful geometry; and 2) there are schematic space whose spectrum is a scheme which cannot be obtained as finite models of it in the sense described in the previous paragraph---they may be "generalized finite models" with respect to the topology of flat monomorphisms---. We claim that most standard scheme theory can be extended to the context of schematic spaces, which is the work of the author's PhD thesis, from which \cite{paper grupo etale, paper vankampen} and this paper originate. It is worth noting that the kernel of the $\Spec(-)$ functor defines a localization of $\mathbf{SchFin}$, which makes things more subtle in practice, identifying different \textit{discretizations} of the same geometry.

\textit{Spiritually}, a morphism of schemes deserves the title of "proper" if it verifies the conclusion of Grothendieck's celebrated finiteness theorem: such a morphism $f\colon S\to T$ between locally Noetherian schemes---so that coherent modules behave well---must verify that for any coherent module $\M$ on $S$, all higher direct images $R^if_*\M$ are coherent. In categorical terms, this says that the derived direct image functor $\mathbb{R}f_*$, which does preserve quasi-coherence under quasi-compactness and quasi-separatedness hypothesis on $f$, restricts to 
\begin{align*}
\mathbb{R}f_*\colon \Coh(S)\to D_{c}(T),
\end{align*}
where $D_{c}(T)$ is the derived category of complexes with coherent cohomology.

Recall that a proper morphisms of schemes is usually defined as one verifying three properties: it must be 1) separated, 2) universally closed and 3) locally of finite presentation; where separatedness and universal closedness are usually characterized in terms of the valuative criterion---which codifies universal closedness for the specialization topology, equivalent to universal closedness for the Zariski topology under our hypothesis thanks to Chevalley's Theorem \cite[IV, Theorem 1.8.4]{EGA}---. The usual proof of the aforementioned finiteness theorem has three main ingredients: Chow's Lemma, that relates proper and projective morphisms; the \textit{d\'evissage} Lemma, that gives sufficient conditions for a subcategory of coherent sheaves to be equivalent to the whole category; and the weak version of the desired finiteness result for projective morphisms, which follows from direct calculation on finite-dimensional projective spaces.

As part of our general aim of generalizing scheme theory to the schematic world, are interested in giving a definition of proper morphism for schematic spaces that---at least in many cases---\textit{discretizes} the scheme-theoretic notion and for which the analogous finiteness result holds. The result we arrive to not only achieves this, but it also provides a generalization of the standard definition for schemes by admitting a weaker version of the "finite type" condition adapted to the topology of flat monomorphisms. We will call these new morphisms \textit{pro-locally proper}. At this stage, the proof of the finiteness theorem can be carried out in two ways. The first method would be adapting Grothendieck's original strategy, which is feasible because we have a theory of finite projective spaces \cite{fernando universal} for which finiteness of cohomology holds and the \textit{d\'evissage} Lemma is a categorical statements whose proof works in our situation with little change. We take, however, a second route: if we assume that Grothendieck's result holds for schemes, then we can use a descent argument in the topology of flat immersions of schematic spaces. We will make use of two techniques: 1) descent arguments in terms of the cylinder spaces introduced in \cite{paper vankampen}, to pass from a "discrete" characterization of pro-locally proper map to one in scheme-theoretic terms; and 2) an elementary study of the theory of \textit{proschemes} and \textit{algebraic proschemes}, which will be the geometric objects \textit{represented by} schematic spaces (see Section \ref{section cohomology}). Actually, we will provide two different definitions of "properness" for schematic spaces, which do not necessarily coincide in full generality. One of them extends the definition for schemes in the separated case, while the other one does not.

The paper is structured as follows: we first give a brief introduction to the theory of schematic spaces required to understand the context of our discussion, which includes recalling the construcion of the cylinder space and some facts about the topology \textit{of flat immersions} that is natural in the world of schematic spaces---some definitions will be presented in a novel incarnation adapted to the discussion of this paper---; then we see how the valuative criterion-type of definitions behave well on schematic spaces while, unfortunately, defining morphisms of finite presentation or finite type presents various issues. Next, we define \textit{pro-locally of finite presentation} and \textit{pro-locally proper} morphisms as a solution to these issues and characterize them in more explicit terms via the cylinder space construction. After that, we digress into the more technical part of the paper in Sections \ref{section algebraic} and \ref{section cohomology}: we introduce a notion of \textit{algebraic morphism} inspired by that of \textit{representable functor} and use it to give an alternative definition for properness: \textit{algebraically proper morphisms}; but more importantly, this notion of algebraicity is used to relate schematic spaces that are not finite models of schemes and those that are, in form of Theorem \ref{theorem b422} and its consequences related to preservation of cohomology. Finally, we prove the promised finiteness Theorem for both pro-locally proper morphisms and algebraically proper morphisms.
\section{Schematic spaces, flat immersions and the cylinder space}

The basic theory of schematic spaces is dealt with from a cohomological point of view in \cite{paper grupo etale, fernando schemes, fernando dualidad} and from a combinatorial point of view in \cite{paper vankampen} or \cite{notas de pedro}. For our purposes here, we will prove that these are equivalent to a third equivalent formulation, which has a more "synthetic" flavour.

First, let $\mathbf{CRing}\data$ be the category of ringed finite posets---notation explained in \cite{paper vankampen}--- and $\mathbf{LRS}$ the category of locally ringed spaces. We extend the ordinary prime spectrum functor as follows:
\begin{align*}
\Spec\colon\mathbf{CRing}\data&\to \mathbf{LRS} & & X\mapsto\colim_{x\in X}\Spec(\OO_{X, x}).
\end{align*}

Note that if $X$ is a finite model of a qc-qs scheme $S$---as outlined in the Introduction---this construction recovers the original scheme. Furthermore, if $X$ has restriction morphisms, $r_{xy}\colon\OO_{X, x}\to\OO_{X, y}$ for any $x\leq y$, that are flat ring epimorphisms of finite presentation---i.e. they induce open immersions between their prime spectra---, $\Spec(X)$ is also a scheme. We will, however, admit a more general situation that will allow us to "model" schemes, but with respect to generalized coverings; as well as other locally ringed spaces that still present an "scheme-like" behaviour.

\begin{defn}
Let $X$ be a ringed poset. We say that $X$ is pseudoschematic if its restriction morphisms are flat ring epimorphisms. 
\end{defn}
\begin{rem}
Recall that a faithfully flat ring epimorphism is an isomorphism. This fact---essentially---implies that, if we have a commutative triangle of locally ringed spaces
\begin{align*}
\xymatrixrowsep{0.05in}\xymatrix{ 
\Spec(A)\ar[rd]\ar[dd] & \\
& S \\
\Spec(B)\ar[ru] &
}
\end{align*}
where all morphisms are flat monomorphisms and where both arrows into $S$ define the same \textit{topological image}, then $A\simeq B$. In other words, \textit{any subset of $S$ that is the image of a flat monomorphism}---for instance, an arbitrary intersection of Zariski opens---comes from a uniquely determined---up to isomorphism---flat monomorphism, defined by restricting the sheaf of rings to the subset. Morally, this tells us that the topology of flat monomorphisms is \textit{the weakest topology in which it is possible to fully study locally affine spaces from their locally ringed space incarnation}. This explains why, for example, the locally ringed space associated to an algebraic space is not that relevant for its geometry: since étale morphisms are not monomorphisms, the corresponding sheaf of rings does not carry enough algebraic information. For practical use of this property, see Section \ref{section cohomology}.
\end{rem}

\begin{defn}\label{defn schematic}
Let $X$ be a pseudoschematic space. We say that $X$ is \textit{schematic} if for all $x, y, z\in X$ with $x, y\geq z$, the natural morphism
\begin{align*}
\OO_{X, x}\otimes_{\OO_{X, z}}\OO_{X, y}\to\prod_{t\geq x, y}\OO_{X, y}
\end{align*}
is faithfully flat.
\end{defn}

The next Theorem gives the promised "synthetic characterization": a pseudoschematic space is schematic if it is---in a sense that can be made more precise---a finite model of its spectrum.
\begin{thm}\label{theorem characterization schematic}
Let $X$ be a pseudoschematic space. Then $X$ is {schematic} if and only if every point $\x=[(x, \p_x)]\in\Spec(X)$ has a unique representative whose topological component $\pi_X(x)$ is maximal with respect to the partial order of $X$. In short, if there exists a \textit{centre map}
\begin{align*}
\pi_X\colon \Spec(X)\to X.
\end{align*}
\begin{proof}
Note that, when $X$ is pseudoschematic, the restriction morphisms induce flat monomorphisms of affine schemes between the prime spectra and, thus, the underlying topological spaces of fibered products between those spectra coincide with the topological fibered products.


Now, let us assume that $\pi_X$ exists and prove that for any $z\leq x, y$ the map $\coprod_{t\geq x, y}\Spec(\OO_{X, t})\to \Spec(\OO_{X, x})\times_{\Spec(\OO_{X, z})}\Spec(\OO_{X, y})$ is surjective. Indeed, a point $(\p_x, \q_y)\in \Spec(\OO_{X, x})\times_{\Spec(\OO_{X, z})}\Spec(\OO_{X, y})$ defines a point in $\Spec(X)$ which, by the hypothesis, gives us some unique $t\in X$ and $\alpha_t\in\Spec(\OO_{X, y})$ such that $t\geq x, y$ and $\alpha_t\mapsto (\p_x, \q_y)$ via the map above. For the converse, follow the argument backwards. 
\end{proof}
\end{thm}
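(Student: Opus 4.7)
The plan is to split the proof into the two implications. Both directions rely on reducing the schematic axiom to a purely geometric surjectivity statement: since pseudoschematic already forces all restrictions to be flat ring epimorphisms, each $\OO_{X,t}$ (for $t\geq x,y$) is automatically flat over $\OO_{X,x}\otimes_{\OO_{X,z}}\OO_{X,y}$, so the faithful-flatness in Definition \ref{defn schematic} becomes equivalent to surjectivity of
\[
\coprod_{t \geq x,y}\Spec(\OO_{X,t})\twoheadrightarrow \Spec(\OO_{X,x})\times_{\Spec(\OO_{X,z})}\Spec(\OO_{X,y}).
\]

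For the ``$\Leftarrow$'' direction I would argue as follows. Given $(\mathfrak{p}_x,\mathfrak{q}_y)$ in the fibered product, both primes share their image in $\OO_{X,z}$, so they define the same class $\mathbf{x}\in\Spec(X)$. The hypothesis gives a unique maximal representative $(t,\alpha_t)$; combining existence of a maximal lift starting from $(x,\mathfrak{p}_x)$ (obtained by iteratively lifting, see below) with the uniqueness assumption forces $t\geq x$, and symmetrically $t\geq y$, after which $\alpha_t$ maps onto $(\mathfrak{p}_x,\mathfrak{q}_y)$.

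For the ``$\Rightarrow$'' direction, existence of a maximal representative is immediate from finiteness of $X$: starting from any representative, one iteratively lifts along the flat ring epimorphisms $r_{xy}$ as long as a lift exists; the process terminates at a maximal position. The hard part will be uniqueness, and this is where the schematic axiom genuinely enters. My plan is to introduce the auxiliary relation $\sim_1$ on $\coprod_x \Spec(\OO_{X,x})$ by declaring $(x,\mathfrak{p})\sim_1 (y,\mathfrak{q})$ iff some $\gamma\in\Spec(\OO_{X,u})$ with $u\geq x,y$ restricts to both primes. This relation is manifestly reflexive and symmetric, and the schematic axiom supplies transitivity: given $u_{12}\geq x_1,x_2$ and $u_{23}\geq x_2,x_3$ with lifts whose restrictions agree at $x_2$, applying the surjectivity above with $z=x_2$ to $u_{12},u_{23}$ produces $v\geq u_{12},u_{23}$ carrying a common prime $\delta$, whence $v\geq x_1,x_3$ with $\delta$ restricting to $\mathfrak{p}_1,\mathfrak{p}_3$. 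Since every one-step restriction is a $\sim_1$-relation and every $\sim_1$-relation is visibly a two-step zigzag in $\sim$, I conclude $\sim = \sim_1$ under the schematic hypothesis. Applied to two supposedly maximal representatives $(t_1,\alpha_1),(t_2,\alpha_2)$ of the same point, this furnishes $u\geq t_1,t_2$ with a prime restricting to both, and maximality forces $u=t_1=t_2$ and hence $\alpha_1=\alpha_2$. The main obstacle is the transitivity of $\sim_1$, which is precisely the incarnation of the schematic axiom as ``common lower bounds can be promoted to common upper bounds''.
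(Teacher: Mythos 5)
Your proposal is correct and follows essentially the same route as the paper: both reduce the faithful-flatness condition of Definition \ref{defn schematic} to surjectivity of $\coprod_{t\geq x,y}\Spec(\OO_{X,t})$ onto $\Spec(\OO_{X,x})\times_{\Spec(\OO_{X,z})}\Spec(\OO_{X,y})$ (flatness being automatic from the pseudoschematic hypothesis, and points of the fibered product being pairs of primes agreeing over $z$), and then identify this surjectivity with the existence and uniqueness of maximal representatives. Your auxiliary relation $\sim_1$ and its transitivity is a correct and useful elaboration of the direction the paper dispatches with ``follow the argument backwards''; the only step asserted with equal brevity in both your argument and the paper's is that the unique maximal representative $\alpha_t$ actually restricts to the given pair $(\p_x,\q_y)$, which ultimately rests on the injectivity of $\Spec(\OO_{X,x})\to\Spec(\OO_{X,z})$ forcing the class to have a single representative over each relevant component.
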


Originally, schematic morphisms were also defined via cohomological means. When both source and target space are schematic, their description can be simplified and even given in terms of properties of ring morphisms---see \cite[Definition 2.1]{paper vankampen}---. By an argument very similar to the one of Theorem \ref{theorem characterization schematic}, this last description can be seen to be equivalent to the following:
\begin{defn}\label{definition schematic morphism}
Let $X, Y$ be schematic spaces. A morphism $f\colon X\to Y$ is \textit{schematic} if and only if the diagram
\begin{align*}
\xymatrix{ 
\Spec(X)\ar[d]_{\pi_X}\ar[r]^{\Spec(f)}& \Spec(Y)\ar[d]^{\pi_Y}\\
X\ar[r]^f& Y
}
\end{align*}
commutes (in $\mathbf{Set}$).
\end{defn}
\begin{rem}
Note that, for any morphism of ringed spaces between schematic spaces, the diagram of Definition \ref{definition schematic morphism} verifies $f\circ \pi_X\leq \pi_Y\circ \Spec(f)$. In other words, \textit{$f$ is schematic if and only if it preserves "centres"}; so we may say that schematic morphisms are \textit{central}.
\end{rem}

An important subclass of schematic spaces is that of \textit{affine} schematic spaces. Different perspectives on this notion can be found in \cite{paper vankampen, fernando afines, fernando schemes}:
\begin{defn}
A schematic space $X$ is said to be \textit{affine} if its global sections functor $\Qcoh(X)\to \Qcoh(\OO_X(X))$ is an equivalence of categories. A morphism $f\colon X\to Y$ is said to be \textit{affine} if $f^{-1}(U_y)$ is affine for all $y\in Y$.
\end{defn}

With help of Corollary \ref{corollary useless}, presented later in this paper, one proves:
\begin{prop}\label{prop affine characterization}
A schematic space $X$ is affine iff $\Spec(X)\simeq \Spec(\OO_X(X))$. 
\begin{proof}
The \textit{only if} part is a particular case of  \cite[Proposition 6.6]{fernando schemes}---a beautiful construction that generalizes to other situations, partially discussed in \cite{paper vankampen}---. For the converse, note that $\Spec(X)\simeq \Spec(\OO_X(X))$  gives an equivalence $\mathbf{dQcoh}(\Spec(X))\simeq \mathbf{dQcoh}(\Spec(\OO_X(X))$, but since both are schemes, these categories are isomorphic to their ordinary categories of quasi-coherent modules by Corollary \ref{corollary useless}.
\end{proof}
\end{prop}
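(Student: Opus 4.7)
The plan is to transport the given homeomorphism/isomorphism of locally ringed spaces into an equivalence at the level of quasi-coherent categories, and then identify the composite with the global sections functor. The only-if part is cited, so I concentrate on the converse.

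First, I would invoke the fundamental link between quasi-coherent modules on a schematic space $X$ and quasi-coherent-like modules on $\Spec(X)$. Reading the theme of the paper, this link should take the form of an equivalence $\Qcoh(X)\simeq \mathbf{dQcoh}(\Spec(X))$, where $\mathbf{dQcoh}$ denotes the "discrete" or "descent-theoretic" category of modules on the locally ringed space $\Spec(X)$---the natural target of the construction $X\mapsto \Spec(X)$ that reflects the schematic-space modules faithfully, even when $\Spec(X)$ is not a scheme. This is precisely the mechanism by which schematic spaces are "finite models" of their spectra with respect to the topology of flat monomorphisms, as stressed in the remark following the definition of pseudoschematic space.

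Second, I would use the assumption $\Spec(X)\simeq \Spec(\OO_X(X))$ to transfer this equivalence: since the two locally ringed spaces are isomorphic, so are their categories $\mathbf{dQcoh}$. Now both spaces are in fact schemes---$\Spec(\OO_X(X))$ tautologically, and $\Spec(X)$ via the assumed isomorphism---so Corollary \ref{corollary useless} applies and identifies $\mathbf{dQcoh}$ with the usual $\Qcoh$ on each. Combining these steps yields the chain
\begin{align*}
\Qcoh(X)\;\simeq\; \mathbf{dQcoh}(\Spec(X))\;\simeq\; \mathbf{dQcoh}(\Spec(\OO_X(X)))\;\simeq\; \Qcoh(\Spec(\OO_X(X)))\;\simeq\; \Qcoh(\OO_X(X)),
\end{align*}
where the last isomorphism is the classical Serre equivalence for affine schemes.

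To finish, I would verify that the composite equivalence agrees with the global sections functor $\Qcoh(X)\to\Qcoh(\OO_X(X))$, so that the latter is itself an equivalence and $X$ is affine by definition. This compatibility is built into the construction of the functors involved: the equivalence $\Qcoh(X)\simeq \mathbf{dQcoh}(\Spec(X))$ is compatible with $\OO_X(X)=\OO_{\Spec(X)}(\Spec(X))$ on global sections, and Serre's equivalence is global sections by definition. The main obstacle, as I see it, is precisely this package of coherence checks---in particular, that $\mathbf{dQcoh}=\Qcoh$ on genuine schemes---which is exactly what Corollary \ref{corollary useless} is there to supply; the rest of the argument is then essentially a tautology.
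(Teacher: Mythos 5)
Your proof follows exactly the same route as the paper's: transfer the isomorphism $\Spec(X)\simeq \Spec(\OO_X(X))$ to an equivalence $\mathbf{dQcoh}(\Spec(X))\simeq \mathbf{dQcoh}(\Spec(\OO_X(X)))$ and then apply Corollary \ref{corollary useless} to identify both with ordinary categories of quasi-coherent modules. Your final step checking that the composite equivalence is actually the global sections functor is a compatibility the paper leaves implicit, but the argument is essentially identical.
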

Now, let $\mathbf{SchFin}$ be the category of schematic spaces and morphisms. The $\Spec$ functor restricts to $\Spec\colon\mathbf{SchFin}\to \mathbf{LRS}$. One can check that $\mathbf{SchFin}$ has finite fibered products that commute with $\Spec$ \textit{and} with the forgetful to ringed spaces, so it relates products of ringed spaces with those of locally ringed spaces, which does have interest on its own, see \cite{Gillam}. We sketch the proof of the first claim because of its importance to make sense of Section \ref{section algebraic}:
\begin{prop}\label{prop commutation}
The $\Spec$ functor commutes with finite fibered products. 
\begin{proof}
We want to see that $\Psi\colon \Spec(X\times_ZY)\to \Spec(X)\times_{\Spec(Z)}\Spec(Y)$ is an isomorphism for any $X, Y, Z$. If all three spaces are affine, this is trivial, because the global sections ring of $X\times_ZY$ is the tensor product of the three global sections rings (C.f. \cite{fernando schemes}). In general, one first proves that it is bijective, using that points of the target space are 4-tuples $(\x, \mathbf{z}, \mathbf{y}, \alpha)$ with $(\x, \mathbf{z}, \mathbf{y})$ belonging to the topological fibered product and $\alpha$ a prime ideal of $\kappa(\x)\otimes_{\kappa(\mathbf{z})}\kappa(\mathbf{y})$, where $\kappa(\x)=\OO_{X, x}/\p_x$ is the residue field of the point $\x$ with maximal representative $(x, \p_x)$---as in Theorem \ref{theorem characterization schematic}, same for $\mathbf{y}$ and $\mathbf{z}$---; and that points of the source space are equivalence classes of points $(x, z, y, \beta)$ with $(x, y, z)$ a point of $|X\times_ZY|$ and $\beta$ a prime of $\OO_{X, x}\otimes_{\OO_{Z, z}}\OO_{Y, y}$, which is in itself given by a prime of each ring and one of the product of residue fields. The bijection follows from maximality arguments as in Theorem \ref{theorem characterization schematic}. Finally, to see that it enhances to an isomorphism of locally ringed spaces, one argues that, by the affine case and the fact that $\Psi$ is bijective, $U_{(x, y, z)}\to  U_x\times_{U_z}U_y$ induce isomorphisms on spectra for all $(x, z, y)$ and cover the entire space, hence they provide isomorphisms at stalks that prove the claim.
\end{proof}
\end{prop}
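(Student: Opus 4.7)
The plan is to construct the comparison map $\Psi$ and then check it is an isomorphism in two stages: bijectivity on underlying sets, followed by an isomorphism of structure sheaves via a reduction to the affine case. By functoriality of $\Spec$ and the universal property of the fibered product in $\mathbf{LRS}$, the projections from $X\times_Z Y$ induce a canonical morphism
\[
\Psi\colon \Spec(X\times_Z Y)\to \Spec(X)\times_{\Spec(Z)}\Spec(Y).
\]
The existence of $\Psi$ is formal; the content is that it is an iso.

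For the affine base case, if $X$, $Y$, $Z$ are each a single point with global rings $A$, $B$, $C$ and compatible maps $C\to A$, $C\to B$, then by the construction of fibered products in $\mathbf{SchFin}$ (cf. \cite{fernando schemes}) the stalk at the unique point of $X\times_Z Y$ is $A\otimes_C B$, and the claim reduces to the classical fact that $\Spec$ of a tensor product of rings is the fibered product of affine schemes.

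For the general case I would first analyze points on both sides using Theorem \ref{theorem characterization schematic}. A point of $\Spec(X)\times_{\Spec(Z)}\Spec(Y)$ can be uniquely described as a quadruple $(\x,\mathbf{y},\mathbf{z},\alpha)$, where $(\x,\mathbf{y},\mathbf{z})$ lies in the topological fibered product (thanks to flatness of restriction morphisms, which makes algebraic and topological fibered products of affine spectra agree) and $\alpha$ is a prime of $\kappa(\x)\otimes_{\kappa(\mathbf{z})}\kappa(\mathbf{y})$. A point of the source is an equivalence class of $(x,y,z,\beta)$ with $(x,y,z)\in |X\times_Z Y|$ and $\beta\in\Spec(\OO_{X,x}\otimes_{\OO_{Z,z}}\OO_{Y,y})$; decomposing a prime of this tensor product into a triple of primes in each factor plus a prime of the residue-field tensor shows that the two descriptions match, and bijectivity of $\Psi$ follows from a maximality argument entirely parallel to the one in Theorem \ref{theorem characterization schematic}.

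Finally, to upgrade the bijection to an isomorphism of locally ringed spaces, I would cover $X\times_Z Y$ by the affine opens $U_{(x,y,z)}$ and use that $\Spec$ of each maps, under $\Psi$, into $\Spec(U_x)\times_{\Spec(U_z)}\Spec(U_y)$. The affine case applied piecewise makes each of these maps an isomorphism, and the bijection of points established above guarantees that such pieces cover the target; hence $\Psi$ is an iso on every stalk. The step I expect to be most delicate is controlling the equivalence relation defining points of $\Spec(-)$: verifying that the triple $(\x,\mathbf{y},\mathbf{z},\alpha)$ has a \emph{unique} maximal preimage $(x,y,z,\beta)$ is where the centre-map characterization of schematicity (Theorem \ref{theorem characterization schematic}) and the condition in Definition \ref{defn schematic} really earn their keep.
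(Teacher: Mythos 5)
Your proposal matches the paper's own argument in every essential respect: the formal construction of $\Psi$, the reduction of the affine case to $\Spec$ of a tensor product, the description of points on both sides as quadruples resolved by the maximality argument of Theorem \ref{theorem characterization schematic}, and the final upgrade to an isomorphism of locally ringed spaces by covering with the affine pieces $U_{(x,y,z)}\to U_x\times_{U_z}U_y$. No gaps; this is the same proof.
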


Since there are non-isomorphic schematic spaces with isomorphic spectra, we shall localize our category by the "kernel" of $\Spec$.
\begin{defn}
A schematic morphism $f$ is said to be a \textit{qc-isomorphism} if $\Spec(f)$ is an isomorphism.
\end{defn}
This class of morphisms clearly defines a---maximal---left multiplicative system of arrows. Note that $X$ is affine if and only if $X\to (\star, \OO_X(X))$ is a qc-isomorphism. In general, working as in the proof of Proposition \ref{prop affine characterization}, one characterizes these as maps inducing equivalences of quasi-coherent modules \textit{or} affine morphisms that are isomorphisms at the level of sheaves of rings.  Denote the corresponding Verdier quotient by $\mathbf{SchFin}_{\qc}$ and consider
\begin{align*}
\Spec\colon\mathbf{SchFin}_{\qc}\to\mathbf{LRS}
\end{align*}
which is now faithful by construction, but \textit{a priori} not full---we will obtain fullness under additional "algebraicity" restrictions discussed in Sections \ref{section algebraic} and \ref{section cohomology}---. The paradigm for what constitutes a "good" definition or property in the schematic category is, according to our judgement, the following: 
\begin{itemize}
\item Our "geometric" objects of study are the locally ringed spaces associated to schematic spaces via $\Spec$, thus in order to define properties of schematic spaces or morphisms, we---generally---want them to factor through $\mathbf{SchFin}_{\qc}$. We say that such properties are \textbf{geometric}.

\item An easy way out of the previous point would be declaring that an object verifies a property if and only if its spectrum does. The problem is that this process defeats the purpose of working with finite models as a simplification of less explicit spaces. As such, we want our properties to \textbf{discrete} \textit{as often as possible}, i.e. to be able to judge whether or not they hold without applying the $\Spec$ functor. We just remark that, in some cases not discussed here, this process requires considering certain reflexive subcategories of the schematic category, see for example the discussion on connectedness on \cite{paper grupo etale}.
\end{itemize}

\subsection{Flat immersions and cylinder spaces}

The following results are extracted from the prequel \cite{paper vankampen} and we refer there for a fuller exposition in terms of the notion of \textit{category of $\C\data$}. We shall avoid such terminology here and simply state the most elementary facts we need.
\begin{defn}
A morphism $f\colon X\to Y$ is a \textit{flat immersion} if it is flat, i.e. $f\colon\OO_{Y, f(x)}\to\OO_{X, x}$ for all $x\in X$; and its relative diagonal $\Delta_f\colon X\to X\times_YX$ is a qc-isomorphism. A finite family of flat immersions $\{f_i\colon X_i\to Y\}$ is a cover of $Y$ if $g\colon \coprod_iX_i\to Y$ is faithfully flat, i.e. if, additionally, the map $\OO_{Y, y}\to\prod_{f_i(x_i)\geq y}\OO_{X_i, x_i}$ induces a surjection of prime spectra. 
\end{defn}

Now, let $P$ be a finite poset and $\X\colon P\to \mathbf{SchFin}^\op$ be a functor. In \cite{paper vankampen} this is called a $\mathbf{SchFin}^\op$-datum. The following construction is a explicit presentation of the \textit{lax colimit} of $\X$, seen as a pseudofunctor with respect to the natural 2-categorical structure on $\mathbf{SchFin}$. For each $p\leq q$ in $P$ let $\X_{pq}\colon\X(q)\to \X(p)$ denote the induced morphism. A point of $\X(p)$ will be denoted by $x_p$ to indicate the space it originally belongs to.

\begin{defn}
Given $\X$ as above, we define the \textit{cylinder of $\X$} as the ringed poset $\Cyl(\X)$ such that:
\begin{itemize}
\item Its underlying set is $\coprod_{p\in P}|\X(p)|$. It is considered as a poset with the partial order defined by:
\begin{align*}
x_p\leq y_q \Leftrightarrow p\leq q \text{ in }P\text{ and }p\leq \X_{pq}(q)\text{ in }\X(p)
\end{align*}
\item Its sheaf of rings has stalk rings $\OO_{\Cyl(\X), x_p}=\OO_{\X(p), x_p}$ and restriction maps induced by those of each $\X(p)$ and the natural morphisms of sheaves of rings $\X^\sharp_{pq}$ for all $p\leq q$.
\end{itemize}
\end{defn}

Now, in \cite{paper vankampen} we give a characterization of functors $\X$ whose cylinder space is schematic. The relevant corollary is the following:
\begin{prop}\cite[Corollary 6.14]{paper vankampen}\label{prop nerve}
Let $X$ be schematic, $\{U_i\to X\}_{i\in I}$ a finite \textit{family} of flat immersions and let $\U\colon \PP^*(I)\to \mathbf{SchFin}^\op$ be the \textit{nerve} functor---with source the non-empty parts of $I$---, which is defined by $\U(\Delta)=\prod_{i\in \Delta}U_i$---fibered product over $X$---. Then $\Cyl(\U)$ is schematic and comes equipped with a natural flat immersion $\Cyl(\U)\to X$ that is a qc-isomorphism if and only if the original family was a covering.
\end{prop}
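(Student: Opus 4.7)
The plan is to verify in sequence four things: pseudoschematicity of $\Cyl(\U)$, its schematicity via a centre map, the construction of a natural flat immersion $\Cyl(\U)\to X$, and the covering criterion for it being a qc-isomorphism.

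First I would note that each $\U(\Delta)=\prod_{i\in\Delta}U_i$ is schematic by Proposition \ref{prop commutation}, and that each projection $\U(\Delta')\to\U(\Delta)$ for $\Delta\subseteq\Delta'$ is itself a flat immersion, being the base change of the flat immersion $\prod_{i\in\Delta'\setminus\Delta}U_i\to X$ along $\U(\Delta)\to X$ (flat immersions are stable under composition, base change and finite fibered products). This immediately yields pseudoschematicity of $\Cyl(\U)$: the restriction morphism associated to an inequality $x_\Delta\leq y_{\Delta'}$ factors as the internal restriction of $\U(\Delta)$ from $x_\Delta$ to $\U_{\Delta\Delta'}(y_{\Delta'})$—a flat ring epimorphism by pseudoschematicity of $\U(\Delta)$—followed by the stalk map of the projection $\U(\Delta')\to\U(\Delta)$, which is a flat ring epimorphism by the preceding remark.

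To show that $\Cyl(\U)$ is schematic I would invoke Theorem \ref{theorem characterization schematic} and exhibit a centre map. Every point $\mathbf{x}\in\Spec(\Cyl(\U))$ is represented by some pair $(x_\Delta,\p)$ with $\p\in\Spec(\OO_{\U(\Delta),x_\Delta})$; its image in $\Spec(X)$ has a unique centre by schematicity of $X$. The set of $\Delta\in\PP^*(I)$ for which $\mathbf{x}$ lifts to $\Spec(\U(\Delta))$ is closed under unions via Proposition \ref{prop commutation}, since $\U(\Delta_1\cup\Delta_2)\simeq\U(\Delta_1)\times_X\U(\Delta_2)$ and a point lifts to a fibered product iff it lifts to both factors, so there is a maximum $\Delta_{\mathbf{x}}$; the centre of $\mathbf{x}$ in $\Cyl(\U)$ is then the unique maximal representative of its lift to $\Spec(\U(\Delta_{\mathbf{x}}))$, which exists by schematicity of $\U(\Delta_{\mathbf{x}})$.

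The natural map $\Cyl(\U)\to X$ sends $x_\Delta$ to its image in $X$ under $\U(\Delta)\to X$; its stalk maps are compositions of flat ring epimorphisms, and its relative diagonal is seen to be a qc-isomorphism by applying $\Spec$ and using Proposition \ref{prop commutation}, which reduces it to the diagonal of a flat monomorphism of locally ringed spaces. For the last assertion, the image of $\Spec(\Cyl(\U))\to\Spec(X)$ is exactly $\bigcup_i\Spec(U_i)\subseteq\Spec(X)$; since a flat immersion is a qc-isomorphism as soon as its induced map on spectra is bijective, $\Cyl(\U)\to X$ is a qc-isomorphism if and only if $\coprod_i U_i\to X$ is faithfully flat, i.e. iff the original family is a cover. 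The main obstacle I expect is the centre-map step, which hinges on the union-closure property of lifting sets and therefore on the specific structure of flat immersions—general morphisms would fail the argument because intersections of subobjects would not correspond to fibered products in the appropriate way.
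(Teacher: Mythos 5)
The paper does not actually prove this proposition: it is imported verbatim from the prequel as \cite[Corollary 6.14]{paper vankampen}, where it is obtained as a corollary of a general criterion characterizing which $\mathbf{SchFin}^{\op}$-data have schematic cylinders. So there is no in-paper proof to measure you against; what you have written is instead a direct verification using the tools this paper does develop, chiefly the centre-map characterization of Theorem \ref{theorem characterization schematic} and the commutation of $\Spec$ with fibered products (Proposition \ref{prop commutation}). That route is legitimate and arguably more self-contained in the context of this paper, and your identification of the crucial mechanism---that the set of $\Delta$ to which a point of $\Spec(X)$ lifts is closed under unions because $\U(\Delta_1\cup\Delta_2)\simeq\U(\Delta_1)\times_X\U(\Delta_2)$ and lifting along flat monomorphisms is detected factorwise---is exactly the right special feature of flat immersions. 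Two steps deserve more care than you give them. First, having produced the candidate centre (maximal $\Delta_{\mathbf x}$, then the maximal representative inside $\Spec(\U(\Delta_{\mathbf x}))$), you still must check that it dominates, in the cylinder's partial order, \emph{every} representative $(y_{\Delta'},\q)$ of the same colimit class, and that it is the unique maximal one; this requires unwinding the colimit identifications and is where the poset structure of $\Cyl(\U)$ actually enters, rather than only the fibered-product combinatorics. Second, your final step quietly uses that a flat immersion whose map on spectra is bijective is a qc-isomorphism; this is true, but it rests on the remark that a faithfully flat ring epimorphism is an isomorphism (so that a surjective flat monomorphism of locally ringed spaces is an isomorphism of sheaves of rings, not merely a homeomorphism), and you should cite that explicitly rather than treat it as immediate.
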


\begin{rem}
The cylinder construction is functorial in the following sense: a morphism of "data" $f\colon\Y\to \X$ with respective source posets $|\Y|$ and $|\X|$ is given by a continuous map (a functor) $|f|\colon |\Y|\to|\X|$ between the underlying posets and a collection of compatible morphisms $f_p\colon \Y(p)\to \X(f(p))$ for all $p\in|\Y|$. Such a morphism induces $\Cyl(f)\colon\Cyl(\Y)\to \Cyl(\X)$, as it can be readily checked. Furthermore, \textit{if both source and target are schematic, and $|f|$ is the identity map, $\Cyl(f)$ will be schematic.}
\end{rem}

\section{Valuative criteria and v-proper morphisms}
\textit{For the sake of reducing ourselves to the case of discrete valuation rings, we will assume that all spaces in this section are locally Noetherian.} A general theory may be developed by admitting infinite schematic spaces.
\smallskip

Let $A$ be a discrete valuation ring with fraction field $\Sigma$. The prime spectrum $\Spec(A)$ is a scheme, but also a schematic space with underlying poset $\{0\leq 1\}$ and sheaf of rings $\OO_{\Spec(A), 0}=A$ and $\OO_{\Spec(A), 1}=\Sigma$. The locally ringed space associated to it via the $\Spec\colon\mathbf{SchFin}\to\mathbf{LRS}$ functor is $\Spec(A)$ itself, i.e. abusing notation we can write 
\begin{align}\label{equation}
\Spec(\Spec(A))=\Spec(A).
\end{align}

The previous discussion also works for a field $A$---which gives a nice theory of geometric points---and, in general, the arguments that follow work for any scheme $S$ that has the specialization topology and verifies equation \ref{equation}.

\begin{lem}\label{lemma specialization topology adjunction}
Let $\mathbf{CRing}\data$ denote the category of ringed posets and $\mathbf{RS}_0$ the category of T$_0$ ringed spaces. The inclusion functor
\begin{align*}
\mathbf{CRing}\data\hookrightarrow \mathbf{RS}_0
\end{align*}
admits a right adjoint $(-)_\leq$, mapping each $X\in\mathbf{RS}_0$ to the poset $X_\leq$ given by the specialization partial order with sheaf of rings given by the stalk rings.
\begin{proof}
This is just an enhancement to the ringed setting of the well known property of the specialization partial order. Details to the reader.
\end{proof}
\end{lem}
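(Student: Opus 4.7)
My plan is to enhance to the ringed setting the standard topological adjunction between posets with Alexandrov topology and T$_0$ spaces with specialization order. I would proceed in three steps.

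First, I would verify that $X_\leq$ is a well-defined object of $\mathbf{CRing}\data$ for any $X\in\mathbf{RS}_0$. The relation $x\leq y$ iff $x\in\overline{\{y\}}$ is a preorder, and antisymmetry is precisely the T$_0$ axiom. For the sheaf of rings on the Alexandrov topology of $X_\leq$, specified via stalks $\OO_{X,x}$ and restrictions $\OO_{X,x}\to\OO_{X,y}$ for $x\leq y$: such a canonical restriction exists because $x\leq y$ forces $y$ to lie in every open neighborhood of $x$, so any germ $(U,s)$ at $x$ determines a germ at $y$ by the inclusion $y\in U$.

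Second, I would construct the $\Hom$-bijection. In the forward direction, a morphism $(f,f^\sharp)\colon F(Y)\to X$ in $\mathbf{RS}_0$ with $Y$ a ringed poset yields an order-preserving map $|Y|\to|X_\leq|$ since continuous maps always preserve specialization, together with stalk maps $\OO_{X,f(y)}\to\OO_{Y,y}$ extracted from $f^\sharp$ that assemble into the required morphism $Y\to X_\leq$ in $\mathbf{CRing}\data$. In the reverse direction, starting from $(g,g^\sharp)\colon Y\to X_\leq$: the order-preserving map $g$ is automatically continuous as a map $|Y|\to|X|$ because every open of $X$ is stable under generization (hence an upper set for the specialization preorder), so $g^{-1}(U)$ is an upper set in $|Y|$ and thus open in the Alexandrov topology. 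The sheaf morphism $\OO_X\to g_*\OO_Y$ is defined on each open $U\subseteq X$ by exploiting that $\OO_Y(g^{-1}(U))\simeq\lim_{y\in g^{-1}(U)}\OO_{Y,y}$—valid because $|Y|$ is Alexandrov—and composing $\OO_X(U)\to\OO_{X,g(y)}\xrightarrow{g^\sharp_y}\OO_{Y,y}$ into this limit, with compatibility guaranteed by $g^\sharp$ intertwining the restrictions of $Y$ and $X_\leq$.

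Third, I would check that the two constructions are mutually inverse and natural. Inverseness reduces to the observation that sheaves on Alexandrov spaces have sections determined as limits of their stalks, so any sheaf morphism with Alexandrov-space target is completely captured by its stalk maps; naturality in both variables is immediate from the formulas. The main subtlety worth pointing out is the asymmetric role of the two hypotheses: the Alexandrov structure on $Y$ is what allows reassembly of a sheaf morphism from stalk-level data, while T$_0$-ness of $X$ is needed only to guarantee antisymmetry of the specialization order. No scheme-theoretic input is required, so the only real work is the bookkeeping of germs versus sections; accordingly, I expect no serious obstacle, in line with the author's indication that this is a routine ringed enhancement of a well-known categorical fact.
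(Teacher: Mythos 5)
Your proposal is correct and is precisely the routine enhancement the paper alludes to: you use the right convention ($x\leq y$ iff $x\in\overline{\{y\}}$, so that germs restrict along $\OO_{X,x}\to\OO_{X,y}$ in the direction required by the paper's Alexandrov convention $U_x=\{y\geq y\colon y\geq x\}$), and the reassembly of the sheaf morphism from stalk data via $\OO_Y(g^{-1}(U))\simeq\lim_{y\in g^{-1}(U)}\OO_{Y,y}$ is exactly the point that makes the adjunction work. No gaps; this matches the argument the paper leaves to the reader.
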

\begin{lem}\label{lema qc iso to valuation}
Let $A$ be a field or discrete valuation ring. Any qc-isomorphism $f\colon Z\to \Spec(A)$ is an isomorphism. 
\begin{proof}
The inverse is given by $\pi_Z\circ (\Spec(f)^{-1})_{\leq}$, with $\pi_Z$ the centre map and $(-)_\leq$ the adjoint of Lemma \ref{lemma specialization topology adjunction}. This morphism verifies Definition \ref{definition schematic morphism} by construction, so it is schematic.  Note that $\pi_Z$ is continuous in the specialization topology.
\end{proof}
\end{lem}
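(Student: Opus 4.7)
The plan is to verify that the map given in the statement, $g:=\pi_Z\circ(\Spec(f)^{-1})_\leq$, is a well-defined schematic morphism that serves as a two-sided inverse of $f$. The crucial input of the hypothesis on $A$ is equation \ref{equation}, namely $\Spec(\Spec(A))=\Spec(A)$, which says that the schematic space $\Spec(A)$ is already a finite model of its own spectrum; combined with the fact that this spectrum has the specialization topology, the counit of the adjunction of Lemma \ref{lemma specialization topology adjunction} produces the natural identification $(\Spec(A))_\leq\simeq \Spec(A)$ as ringed posets (with $0\leftrightarrow (t)$ and $1\leftrightarrow (0)$ in the DVR case).

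First, because $f$ is a qc-isomorphism, $\Spec(f)\colon\Spec(Z)\to\Spec(\Spec(A))=\Spec(A)$ is an isomorphism in $\LRS$ and has an inverse $h\in\Hom_{\LRS}(\Spec(A),\Spec(Z))$. I would then apply the right adjoint $(-)_\leq$ of Lemma \ref{lemma specialization topology adjunction} to $h$, producing a morphism of ringed posets $h_\leq\colon\Spec(A)\to\Spec(Z)_\leq$. Post-composing with the centre map $\pi_Z$, which is continuous for the specialization topology and therefore factors as a ringed-poset morphism $\Spec(Z)_\leq\to Z$, yields the candidate $g\colon\Spec(A)\to Z$. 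Schematicity of $g$ follows from Definition \ref{definition schematic morphism} essentially by construction: its centre-compatibility diagram unwinds to $h$ composed with the identity map $\pi_Z\circ\pi_Z^{-1}$, up to the identifications already in place.

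Next I would verify that $f\circ g=\mathrm{id}_{\Spec(A)}$ and $g\circ f=\mathrm{id}_Z$. For the first identity, the centrality of $f$ (Definition \ref{definition schematic morphism}) gives
\begin{align*}
f\circ g=f\circ\pi_Z\circ h_\leq=\pi_{\Spec(A)}\circ\Spec(f)\circ h_\leq=\pi_{\Spec(A)},
\end{align*}
by naturality of $(-)_\leq$ and $\Spec(f)\circ h=\mathrm{id}$. A direct two-point inspection now shows that, under the identification $(\Spec(A))_\leq\simeq \Spec(A)$, the map $\pi_{\Spec(A)}$ is literally the identity: the closed point $(t)$ has as its only representative the pair at level $0$, while the generic point $(0)$ lifts maximally to level $1$. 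For $g\circ f=\mathrm{id}_Z$, I would apply $(-)_\leq$ to $h\circ\Spec(f)=\mathrm{id}_{\Spec(Z)}$ and then use centrality of the morphism $g$ (plus the fact that the unit of the adjunction at $Z\in\mathbf{CRing}\data$ is the identity) to collapse the composition to $\pi_Z\circ \eta_Z=\mathrm{id}_Z$.

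The main obstacle is not the algebraic manipulation but keeping the various incarnations of $\Spec(A)$ straight: as locally ringed space, as schematic space, and as Alexandrov ringed poset via $(-)_\leq$. The assumption that $A$ is a field or DVR is precisely what makes all three coincide --- this is exactly equation \ref{equation}. Without it, $\pi_{\Spec(A)}$ could fail to be an isomorphism and the identification that converts $f\circ g$ into $\mathrm{id}_{\Spec(A)}$ would break down, so this is the only place where the hypothesis on $A$ is essentially used.
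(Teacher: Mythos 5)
Your construction of the candidate inverse $g=\pi_Z\circ(\Spec(f)^{-1})_\leq$ is exactly the paper's, and your verification that $f\circ g=\mathrm{id}_{\Spec(A)}$ --- centrality of $f$, naturality of $(-)_\leq$, and the two-point inspection showing that $\pi_{\Spec(A)}$ is the identity under the identification $(\Spec(A))_\leq\simeq\Spec(A)$ --- is correct and is in fact more explicit than the paper, which only writes down the formula and asserts schematicity.

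The step that does not go through as written is $g\circ f=\mathrm{id}_Z$. There is no unit morphism $\eta_Z\colon Z\to\Spec(Z)_\leq$: the unit of the adjunction of Lemma \ref{lemma specialization topology adjunction} at $Z$ is the identity $Z\to Z_\leq=Z$, not a map into $\Spec(Z)_\leq$, and a point $z\in Z$ carries no distinguished prime of $\OO_{Z,z}$, so the expression $\pi_Z\circ\eta_Z$ is not defined. What the centrality relations actually yield is
\begin{align*}
g\circ f\circ\pi_Z=g\circ\pi_{\Spec(A)}\circ\Spec(f)=\pi_Z\circ\Spec(f)^{-1}\circ\Spec(f)=\pi_Z,
\end{align*}
i.e.\ $g\circ f$ agrees with the identity only on the image of the centre map $\pi_Z\colon\Spec(Z)\to Z$. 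To conclude you need every point of $Z$ to be the centre of some point of $\Spec(Z)$ (and a corresponding identification of stalk rings), which is an extra input about $Z$ that does not follow from schematicity alone and is not supplied by the adjunction formalism. The paper's own proof is equally silent on this second composite, so you have reproduced its route faithfully, but as a standalone argument this is the part of the lemma that still needs justification.
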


\begin{prop}\label{prop localization valuations}
For any field or discrete valuation ring $A$ and schematic space $X$ such that $\Spec(X)$ is a scheme, the localization  functor $\mathbf{SchFin}\to\mathbf{SchFin}_\qc$ induces isomorphisms
\begin{align*}
\Hom_{\mathbf{SchFin}}(\Spec(A), X)\overset{\sim}{\to}\Hom_{\mathbf{SchFin}_{\qc}}(\Spec(A), X).
\end{align*}
\begin{proof}
It follows from Lemma \ref{lema qc iso to valuation}.
\end{proof}
\end{prop}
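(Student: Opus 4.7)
The plan is to unpack morphisms in the localization $\mathbf{SchFin}_\qc$ via the standard roof (calculus of fractions) description and apply Lemma \ref{lema qc iso to valuation} to collapse these roofs back to genuine morphisms in $\mathbf{SchFin}$. Since the content of the proposition is essentially already packaged in the lemma, I expect the proof to be very short.

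More precisely, since qc-isomorphisms form a left multiplicative system, a class in $\Hom_{\mathbf{SchFin}_\qc}(\Spec(A),X)$ is represented by a roof
\[\Spec(A)\overset{s}{\leftarrow} Z\overset{g}{\to} X\]
with $s$ a qc-isomorphism, modulo the usual equivalence relation. Lemma \ref{lema qc iso to valuation} applies directly to $s$ and asserts that it is already an isomorphism in $\mathbf{SchFin}$; consequently the roof represents the honest morphism $g\circ s^{-1}\colon\Spec(A)\to X$, which maps to the prescribed class in the localization. This establishes surjectivity of the comparison map. For injectivity, suppose $f_0,f_1\colon\Spec(A)\to X$ in $\mathbf{SchFin}$ become identified in $\mathbf{SchFin}_\qc$; the equivalence criterion on roofs produces a qc-iso $t\colon W\to\Spec(A)$ with $f_0\circ t=f_1\circ t$, and invoking the lemma once more shows $t$ is an isomorphism, so $f_0=f_1$.

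I do not foresee a genuine obstacle, since all of the substantive content is concentrated in Lemma \ref{lema qc iso to valuation}, which in turn rests on the fact that $\Spec(A)$ coincides with its own spectrum and on the adjunction of Lemma \ref{lemma specialization topology adjunction}. The hypothesis that $\Spec(X)$ is a scheme does not intervene in the calculation above; it is, however, the natural compatibility needed should one wish to re-express fractions with the qc-iso denominator on the $X$-side via the centre map $\pi_X\colon\Spec(X)\to X$ afforded by Theorem \ref{theorem characterization schematic}, and it ensures the statement is compatible with the broader philosophy of viewing $\mathbf{SchFin}_\qc$ as an enhancement of a category of locally ringed spaces.
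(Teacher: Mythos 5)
Your proposal is correct and is precisely the argument the paper intends: its one-line proof (``It follows from Lemma \ref{lema qc iso to valuation}'') is exactly your unpacking of the calculus-of-fractions description of $\Hom_{\mathbf{SchFin}_{\qc}}(\Spec(A),X)$ followed by collapsing the qc-iso denominators via the Lemma. Your side observation that the hypothesis that $\Spec(X)$ be a scheme plays no role in this particular argument is also accurate.
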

\begin{prop}\label{cor valuation schemes and schematic}
For any field or discrete valuation ring $A$ and schematic space $X$, the $\Spec$ functor induces isomorphisms
\begin{align*}
\Hom_{\mathbf{SchFin}}(\Spec(A), X)\overset{\sim}{\to}\Hom_{\mathbf{Sch}}(\Spec(A), \Spec(X)).
\end{align*}
\begin{proof}
Taking finite models on any $\Spec(A)\to \Spec(X)$ produces a morphism in $\mathbf{SchFin}_{\qc}$, and we conclude again by Lemma \ref{lema qc iso to valuation}.
\end{proof}
\end{prop}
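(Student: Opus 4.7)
The plan is to construct an explicit inverse to $f\mapsto\Spec(f)$, mimicking the inverse constructed in the proof of Lemma \ref{lema qc iso to valuation}. Given a morphism $g\colon\Spec(A)\to\Spec(X)$, apply the right adjoint $(-)_\leq$ of Lemma \ref{lemma specialization topology adjunction} to obtain a morphism of ringed posets $g_\leq\colon(\Spec(A))_\leq\to(\Spec(X))_\leq$; by equation \eqref{equation}, $(\Spec(A))_\leq$ is just the schematic space $\Spec(A)$ itself. The centre map $\pi_X\colon\Spec(X)\to X$ of Theorem \ref{theorem characterization schematic} is naturally a morphism of ringed spaces and therefore, via $(-)_\leq$, yields a morphism $(\pi_X)_\leq\colon(\Spec(X))_\leq\to X$ in $\mathbf{CRing}\data$. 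Composition produces the candidate
\begin{align*}
\tilde g := (\pi_X)_\leq\circ g_\leq\colon \Spec(A)\to X.
\end{align*}

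Next I would verify that $\tilde g$ is schematic and that $\Spec(\tilde g)=g$. Schematicity reduces to Definition \ref{definition schematic morphism}: the identity $\tilde g\circ\pi_{\Spec(A)}=\pi_X\circ\Spec(\tilde g)$ holds essentially by construction, since on underlying sets $\tilde g$ factors as $\pi_X\circ g$ and $\pi_{\Spec(A)}$ is a canonical identification via \eqref{equation}. The identity $\Spec(\tilde g)=g$ then follows from the counit computation of the adjunction of Lemma \ref{lemma specialization topology adjunction}, which recovers $g$ from $g_\leq$ after composing with the section of $\pi_X$ provided by Theorem \ref{theorem characterization schematic}---this is essentially the same calculation that makes Lemma \ref{lema qc iso to valuation} work.

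Injectivity of $f\mapsto\Spec(f)$ then follows from the same formula in reverse: any schematic $f\colon\Spec(A)\to X$ satisfies the centre-preserving identity of Definition \ref{definition schematic morphism}, so $f=(\pi_X)_\leq\circ\Spec(f)_\leq$, forcing two schematic maps with the same image under $\Spec$ to coincide. The most delicate point, where I would spend the bulk of the technical write-up, is checking that the above composition behaves correctly at the level of sheaves of rings---not just on underlying sets---and really lands in $X$ as a schematic morphism; the rigidity of $\Spec(A)$ for $A$ a field or DVR, which is essentially its own finite model via \eqref{equation}, is what makes the stalk-level data determine everything and what allows us to bypass the additional scheme hypothesis on $\Spec(X)$ that appeared in Proposition \ref{prop localization valuations}.
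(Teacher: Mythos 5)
Your proposal is correct in substance but takes a genuinely different route. The paper's proof reduces everything to Lemma \ref{lema qc iso to valuation} through the localized category: a morphism $g\colon\Spec(A)\to\Spec(X)$ is first promoted, by taking a finite model of $\Spec(A)$ with respect to the pullback of the cover $\{\Spec(U_x)\}$, to a roof $\Spec(A)\leftarrow Z\to X$ in $\mathbf{SchFin}_{\qc}$, and the left leg, being a qc-isomorphism onto $\Spec(A)$, is then inverted by that Lemma. You instead inline and generalize the formula that appears \emph{inside} the proof of that Lemma: the paper's inverse $\pi_Z\circ(\Spec(f)^{-1})_\leq$ becomes your $\tilde g=\pi_X\circ g_\leq$ for an arbitrary $g$, with no detour through finite models or the localization. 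What your route buys is explicitness --- the inverse is a formula, and injectivity falls out of the same identity $f=\pi_X\circ\Spec(f)_\leq$ read backwards; what it costs is that the verifications the paper delegates to Lemma \ref{lema qc iso to valuation} and the finite-model machinery (that $\tilde g$ is a morphism of ringed posets, that it is schematic, and above all that $\Spec(\tilde g)=g$) must now be done by hand for a general $g$. You correctly flag these as the technical core, but note that your schematicity check is not quite ``by construction'': since $\pi_{\Spec(A)}$ is the identity via equation \ref{equation}, the condition of Definition \ref{definition schematic morphism} for $\tilde g$ reads $\pi_X\circ g=\pi_X\circ\Spec(\tilde g)$, so it only follows \emph{after} $\Spec(\tilde g)=g$ is established --- the logical order is the reverse of the one you wrote.

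One imprecision to repair: $\pi_X$ is \emph{not} a morphism of ringed spaces for the Zariski topology on $\Spec(X)$ (the paper stresses at the start of Section \ref{section cohomology} that $\pi_X$ is Zariski-continuous only when $X$ has restrictions of finite presentation), so you cannot obtain $(\pi_X)_\leq$ by applying the right adjoint of Lemma \ref{lemma specialization topology adjunction} to it. What is true, and what the paper itself uses in Lemma \ref{lema qc iso to valuation}, is that $\pi_X$ is monotone for the specialization order and carries the natural ring maps $\OO_{X,\pi_X(\x)}\to\OO_{\Spec(X),\x}$, hence is directly a morphism of ringed posets $(\Spec(X))_\leq\to X$. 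With that correction your composite is well defined and the argument goes through.
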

\begin{rem}
For finite models of schemes, the statement of Proposition \ref{cor valuation schemes and schematic} is a corollary of Proposition \ref{prop localization valuations} and the fact that the finite model functor $\mathbf{Sch}\to\mathbf{SchFin}_{\qc}$ is fully faithful, but this argument does not work when $X$ is not a finite model of any $S$, even if $\Spec(X)\simeq S$ is a scheme. It will work for "algebraic schematic spaces" by Theorem \ref{theorem b422}.
\end{rem}

This motivates the following Definition:
\begin{defn}\label{definition vproper}
We say that a schematic morphism $f\colon X\to Y$ is \textit{v-separated} (resp. \textit{v-universally closed, v-proper}) if for every discrete valuation ring $A$ with fraction field $\Sigma$ the natural morphism
\begin{align*}
\Hom_{\mathbf{SchFin}_{/Y}}(\Spec(\Sigma), X)\to \Hom_{\mathbf{SchFin}_{/Y}}(\Spec(A), X)
\end{align*}
is inyective (resp. surjective, bijective).
\end{defn}

Note that Definition \ref{definition vproper} is \textit{discrete}, in the sense that it does not require us to talk about the $\Spec$ functor or even localization. Furthermore, thanks to Propositions \ref{prop localization valuations} and \ref{cor valuation schemes and schematic}, it follows that:  
\begin{lem}
\textit{Being v-separated (resp. v-universally closed, v-proper)} is a geometric property of morphisms. In particular, if $f\colon X\to Y$ is a morphism such that $\Spec(f)$ is a morphism of schemes, then $f$ is v-separated (resp. v-universally closed, v-proper) if and only if $\Spec(f)$ verifies the uniqueness part (resp. uniqueness part, both parts) of the valuative criterion.
\end{lem}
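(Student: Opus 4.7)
The plan is to handle the two parts of the Lemma separately, with both ultimately resting on the explicit comparison of Hom-sets provided by Lemma \ref{lema qc iso to valuation} and Propositions \ref{prop localization valuations}, \ref{cor valuation schemes and schematic}, followed by a routine passage to slice categories.

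First, for the geometric claim, I would upgrade Proposition \ref{prop localization valuations} to arbitrary schematic $Z$ (that is, drop the hypothesis that $\Spec(Z)$ be a scheme): since qc-isomorphisms form a left multiplicative system and Lemma \ref{lema qc iso to valuation} forces every qc-iso landing in $\Spec(A)$ to be an honest isomorphism, every roof $\Spec(A)\xleftarrow{\sim} W\to Z$ representing a morphism in $\mathbf{SchFin}_\qc$ collapses to an ordinary morphism $\Spec(A)\to Z$. This yields
\[
\Hom_{\mathbf{SchFin}}(\Spec(A), Z) \;=\; \Hom_{\mathbf{SchFin}_\qc}(\Spec(A), Z)
\]
for every schematic $Z$ and every field or discrete valuation ring $A$, with the same identification for $\Spec(\Sigma)$. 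Next, I would pass to slice categories: since Hom-sets from $\Spec(A)$ into $Y$ already coincide in the two categories, the condition that a morphism $g\colon\Spec(A)\to X$ lie over $Y$ (that is, $f\circ g$ equals a prescribed structure morphism) is preserved and reflected by the localization, so
\[
\Hom_{\mathbf{SchFin}/Y}(\Spec(A), X) \;=\; \Hom_{\mathbf{SchFin}_\qc/Y}(\Spec(A), X),
\]
and likewise with $\Spec(\Sigma)$. Hence Definition \ref{definition vproper} is invariant under any qc-equivalence of $f$, which is precisely what it means to be a geometric property.

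For the \emph{in particular} statement, assume $\Spec(f)$ is a morphism of schemes, so that $\Spec(X)$ and $\Spec(Y)$ are themselves schemes. Proposition \ref{cor valuation schemes and schematic} supplies natural bijections
\[
\Hom_{\mathbf{SchFin}}(\Spec(A), X) \;\xrightarrow{\sim}\; \Hom_{\mathbf{Sch}}(\Spec(A), \Spec(X)),
\]
together with the same statement for $Y$. By functoriality of $\Spec$ these bijections descend to the slice categories over $Y$ and $\Spec(Y)$ respectively, and they intertwine precomposition with the restriction morphism $\Spec(\Sigma)\to\Spec(A)$. Consequently the injectivity (resp. surjectivity, bijectivity) condition of Definition \ref{definition vproper} for $f$ translates exactly into the corresponding condition on the map $\Hom_{\mathbf{Sch}/\Spec(Y)}(\Spec(\Sigma),\Spec(X))\to\Hom_{\mathbf{Sch}/\Spec(Y)}(\Spec(A),\Spec(X))$, which is the uniqueness (resp. existence, both) part of the classical valuative criterion for $\Spec(f)$.

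The main potential obstacle is verifying that the bijections of Propositions \ref{prop localization valuations} and \ref{cor valuation schemes and schematic} are natural in $A$, i.e. compatible with the restriction $\Spec(\Sigma)\to\Spec(A)$; but this is built into their construction, since the inverse in Proposition \ref{cor valuation schemes and schematic} is produced by the finite-model functor, which is itself functorial. The remaining work is mere bookkeeping of slice-category structure, with no new geometric input required.
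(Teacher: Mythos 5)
Your proposal is correct and takes essentially the same route as the paper, which simply asserts that the Lemma follows from Propositions \ref{prop localization valuations} and \ref{cor valuation schemes and schematic}; you are filling in the intended bookkeeping of roofs, slice categories and naturality in $A$. The one genuine detail you add is the observation that Proposition \ref{prop localization valuations} must be extended to arbitrary schematic targets (the paper states it only when the spectrum is a scheme), and your justification via Lemma \ref{lema qc iso to valuation}---every qc-isomorphism onto $\Spec(A)$ is an isomorphism, so every roof collapses and parallel arrows identified in the localization are already equal---is exactly what the paper's own proof of that Proposition uses, so the extension is sound.
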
	

Now we should prove that our valuative criterion-based definitions coincide with more ordinary ones. A complete proof requires a lot of routine work that exceeds our purposes here and will be fully presented in future papers, so we limit ourselves to highlight the main ideas, which are parallel to those of scheme theory.
\begin{defn}
A schematic morphism $f\colon X\to Y$ is a \textit{closed immersion} if it is affine and $f_\sharp\colon\OO_Y\to f_*\OO_X$ is an epimorphism of modules (i.e. surjective). A schematic morphism if said to be \textit{separated} if its relative diagonal $\Delta_f\colon X\to X\times_YX$ is a closed immersion. A schematic space $X$ is said to be \textit{separated} if the projection $X\to \star$ is separated.
\end{defn}
\begin{rem}\label{remark separated morphisms}
If $f\colon X\to Y$ is separated with $Y$ separated and $U, V\to X$ are flat immersions with $U, V$ affine spaces, then $U\times_XV$ is affine and its ring of global sections is a quotient of $\OO(U)\otimes_{\OO_{Y}(Y)}\OO(V)$. Indeed, $U\times_YV$ is affine with global sections being a quotient of $\OO(U)\otimes_{\OO_{Y}(Y)}\OO(V)$ by the hypothesis on $Y$; and finally $U\times_XV\simeq X\times_{X\times_YX}(U\times_YV)$, which is also affine with its ring of global sections being a further quotient of $\OO(U)\otimes_{\OO_{Y}(Y)}\OO(V)$ by the hypothesis on $f$.
\end{rem}
\begin{thm}
A schematic morphism $f\colon X\to Y$ is separated if and only if it is v-separated.
\begin{proof}[Ideas of the proof]
In general, the diagonal map $\Delta_f$ is always an \textit{immersion}, which shall be understood \textit{with respect to the topology of flat immersions}, i.e. there exists a \textit{flat} immersion $W\to X\times_YX$ such that the base change of $\Delta_f$ is a closed immersion and such that $W\times_{X\times_YX}X\to X$ is a qc-isomorphism. This would be the first Lemma for the proof. Now, note that, under the pseudoschematic hypothesis, one can prove that closed subsets of $\Spec(X)$ are in bijective correspondence with sheaves of quasi-coherent radical ideals of $\OO_X$, as it would happen for schemes, so we are allowed to define \textit{morphisms with closed image} in terms of these sheaves of ideals of $X$, thus the second Lemma would be: \textit{an immersion with closed image is a closed immersion}. This is non-trivial.

Finally, it is easy to see that the v-separatedness condition is equivalent to v-universal closedness of the diagonal, thus if we see that v-universally closed morphisms---closed for the specialization topology---have closed image---and more general, \textit{are} universally closed---, we are done by the Lemmas of the previous paragraph. To prove this, one notes that it holds for morphisms between affine schematic spaces---as it does for affine schemes---, so it does hold locally; and then extends the result via direct images through the natural inclusions, which preserve quasi-coherence, and computations at stalks. It is analogue to the proof for schemes in sheaf-theoretic language.
\end{proof}
\end{thm}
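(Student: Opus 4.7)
The plan is to reduce the biconditional to a universal-closedness statement about the diagonal $\Delta_f\colon X\to X\times_YX$, following the classical scheme-theoretic strategy. First I would verify the formal reduction: two morphisms $g_1,g_2\colon\Spec(A)\to X$ over $Y$ that agree after restriction to $\Spec(\Sigma)$ correspond exactly to a morphism $(g_1,g_2)\colon\Spec(A)\to X\times_YX$ whose generic fibre factors through $\Delta_f$; v-separatedness of $f$ is then the statement that every such $A$-point of $X\times_YX$ itself factors through $\Delta_f$, which is the v-universal closedness of $\Delta_f$. This part only uses Propositions \ref{prop localization valuations} and \ref{cor valuation schemes and schematic} together with the universal property of $X\times_YX$.

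Next I would establish the structural Lemma that $\Delta_f$ is always an \emph{immersion in the flat-immersion topology}: there exist a flat immersion $j\colon W\to X\times_YX$ and a factorization of $\Delta_f$ through a closed immersion $X\to W$ such that the base change $X\times_{X\times_YX}W\to X$ is a qc-isomorphism. The proof would proceed by examining, via the cylinder description of Proposition \ref{prop nerve}, the affine local pieces $U_x\times_{U_{f(x)}}U_x$: on each of these $\Delta_f$ restricts to a genuine closed immersion of affine schematic spaces (by direct computation with tensor products of the stalk rings, using Remark \ref{remark separated morphisms} as a template), and gluing these affine closed immersions via the cylinder yields the required $W$.

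I would then appeal to the fact---valid in the pseudoschematic setting because the restriction maps are flat ring epimorphisms---that closed subsets of $\Spec(X)$ correspond bijectively to quasi-coherent radical ideals on $X$. This lets the notion of ``closed image'' be handled entirely inside $\mathbf{SchFin}$, and leads to the key Lemma that \emph{an immersion (in the flat-immersion sense) with closed image is a closed immersion}: if $\Delta_f$ factors through such a flat $W$ and its topological image is cut out by a quasi-coherent radical ideal $\mathcal I\subset\OO_{X\times_YX}$, then $W$ already equals the vanishing locus of $\mathcal I$, so $\Delta_f$ is itself a closed immersion. Combined with the previous step this reduces the theorem to showing that v-universally closed schematic morphisms have closed image.

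The last reduction I would handle first in the affine case, where it is essentially the classical scheme-theoretic valuative criterion for closedness applied to images of points under specialization with DVR test rings, and then globalize by pushing the quasi-coherent radical ideals cut out on each affine patch forward along the flat immersions into the ambient space, checking that they glue thanks to the sheaf condition on quasi-coherent modules for the flat-immersion topology. The main obstacle will be precisely this final globalization together with the classification of closed subsets by quasi-coherent radical ideals: schematic spaces do not inherit these facts automatically from scheme theory, and establishing them requires a careful study of the flat-immersion topology and of direct images along flat immersions (which preserve quasi-coherence). Everything else is formal tracking of the diagonal through the cylinder construction.
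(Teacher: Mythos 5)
Your proposal follows essentially the same route as the paper's own sketch: the reduction of v-separatedness to v-universal closedness of $\Delta_f$, the lemma that the diagonal is an immersion in the flat-immersion topology, the classification of closed subsets of $\Spec(X)$ by quasi-coherent radical ideals leading to ``an immersion with closed image is a closed immersion,'' and the affine-then-globalize argument (via direct images along the natural inclusions) that v-universally closed morphisms have closed image. You also correctly locate the genuinely non-trivial steps in the same places the paper does, so there is nothing substantive to add.
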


If we restrict ourselves to separated spaces---and morphisms---, we can define finite models of schematic spaces with respect to flat monomorphisms, obtaining qc-equivalent objects and maps that have specified stalk rings.

\begin{const}[Finite model with respect to flat immersions]\label{const finite model flat imm}
Given a separated schematic space---or even less, a \textit{semiseparated} space: one with affine diagonal map---, a cover by flat immersions $\{X_i\to X\}$ defines a nerve $\X$ and a qc-isomorphism $\Cyl(\X)\to X$, but, on the other hand, we also have a datum $\OO(\X)$ with $(\star, \OO_{\X(\Delta)}(\X(\Delta)))$ at each $\Delta$. The morphism of data $\X\to \OO(\X)$ is a qc-isomorphism at each $\Delta$ because all $\X(\Delta)$ are affine by the separatedness hypothesis. One can check that $\Cyl(\OO(\X))$ is schematic and that $\Cyl(\X)\to \Cyl(\OO(\X))$ is a qc-isomorphism---this follows easily from $\X\to \OO(\X)$ being the identity map between posets and a qc-isomorphism at each $\Delta$---, thus we shall define $\Cyl(\OO(\X))$ to be the "finite model of $S$ with respect to the given cover". Note that, while this construction provides a schematic space with the desired stalk rings, it does not exactly mirror the ordinary construction, since it does not take the topology of the original space into account: the underlying poset of $\Cyl(\OO(\X))$ is the non-empty parts of the set indexing the covering. This construction generalizes to separated morphisms $f\colon X\to Y$ with $Y$ separated in a straightforward way. 
\end{const}

\section{Finite presentation and proper maps}

If one attempts to define schematic morphisms of finite presentation at the level of stalk rings---as we did for flat morphisms---, problems arise. For example, let us temporarily say that $f\colon X\to Y$ is of finite presentation when $f^\sharp_x\colon\OO_{Y, f(x)}\to \OO_{X, x}$ are of finite presentation for all $x\in X$. If we compose with a qc-isomorphism $\phi\colon Y\to Z$, since the morphisms $\phi_y^\sharp$ are only flat ring epimorphisms, we cannot guarantee that $\phi\circ f$ remains of "finite presentation", i.e. such naive definition is not geometric. For similar reasons, this notion is not stable under base change either.

The root of the issue here is that, while the property of ring homomorphisms of \textit{being of finite presentation} is local in the Zariski topology---\cite{stacks}---, it is \textit{not} local in the topology of flat monomorphisms. For properties "local with respect to flat monomorphisms"---such as \textit{being flat} or being \textit{weakly étale}---it is possible to define things stalk-wise and obtain a number of equivalent characterizations analogous to the usual ones in scheme theory. For properties that do not behave as well, things get a little more subtle. Fortunately, in good cases, our \textit{a priori} indirect definition will specialize to a familiar statement of "local nature" in the topology of flat immersions.
\smallskip

For convenience, let us introduce the following terminology.
\begin{defn}
Let $\C$ any category. We say that two arrows $f\colon c\to d$ and $f'\colon c'\to d'$ are \textit{equivalent}, written $f\simeq g$, if they coincide in any skeleton $\mathrm{sk}(\C)$ of $\C$. In other words, if there are isomorphisms $h\colon c'\to c$ and $g\colon d'\to d$ in $\C$ such that $f\circ h=g\circ f'$. 
\end{defn}

\begin{defn}
We say that two schematic morphisms $f\colon X\to Y$ and $g\colon Z\to T$ are \textit{qc-equivalent} if they induce equivalent arrows in $\mathbf{SchFin}_{\qc}$. We denote this equivalence relation by "$f\overset{\qc}{=}g$".
\end{defn}

In particular, two morphisms $f, g\colon X\to Y$ with the same source and target are qc-equivalent iff there are qc-isomorphisms $\phi, \varphi\colon Z\to X$ with $f\circ \phi=g\circ \varphi$. Also note that a schematic morphism is a qc-isomorphism if and only if it is qc-equivalent to the identity. 

\begin{defn}\label{defn geometric property}
We say that a property $\mathbf{P}$ of morphisms of schematic spaces is \textit{geometric} if for any morphisms $f$ and $g$ with $f\overset{\qc}{=}g$, then $\mathbf{P}(f)\Leftrightarrow\mathbf{P}(g)$.
\end{defn}

Definition \ref{defn geometric property} can be characterized as a series of stability and closure conditions under base change and composition with qc-isomorphisms. As such fact is will not be relevant here, we leave it without further mention. In any case, we can use this notion as inspiration for our general definitions:

\begin{defn}\label{defn relative scheme}
Let $Y$ be a schematic space. We say that a morphism $f\colon X\to Y$ is a \textit{scheme fibration}---or that $X$ is a scheme over $Y$---if it is qc-equivalent to some $f'\colon X'\to Y'$ such that $\Spec(f'^{-1}(U_{y'}))$ is a scheme for every $y'\in Y'$.
\end{defn}
\begin{ejem}
Any affine schematic morphism is a scheme fibration.
\end{ejem}

\begin{defn}\label{defn prolocally P}
Let $\mathbf{P}$ be a property of morphisms of schemes. We say that a scheme fibration $f\colon X\to Y$ as in Definition \ref{defn relative scheme} is \textit{pro-locally $\mathbf{P}$} if, for all $y'\in Y'$, $\mathbf{P}(\Spec(f^{-1}(U_{y'})\to \Spec(U_{y'}))$.
\end{defn}

By construction, we have
\begin{lem}
For any property $\mathbf{P}$, \textit{being pro-ĺocally $\mathbf{P}$} is geometric.
\end{lem}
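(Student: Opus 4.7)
The plan is to show that the definition of \emph{pro-locally $\mathbf{P}$} is intrinsically formulated in terms of an existentially quantified qc-equivalent witness, so the conclusion should drop out once the definitions are carefully unpacked.

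First I would unravel Definitions \ref{defn relative scheme} and \ref{defn prolocally P} into a single existential statement: a morphism $f \colon X \to Y$ is pro-locally $\mathbf{P}$ precisely when there exists $f' \colon X' \to Y'$ with $f \overset{\qc}{=} f'$ such that for every $y' \in Y'$ the locally ringed space $\Spec(f'^{-1}(U_{y'}))$ is a scheme and $\mathbf{P}$ holds for the induced morphism of schemes $\Spec(f'^{-1}(U_{y'})) \to \Spec(U_{y'})$. The essential observation is that the witness $f'$ is only pinned down up to qc-equivalence to $f$; no additional data tied to $f$ itself enters.

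Next I would note that qc-equivalence of schematic morphisms is a genuine equivalence relation. Reflexivity and symmetry are immediate, and transitivity is a consequence of the fact that qc-isomorphisms form a left multiplicative system in $\mathbf{SchFin}$, the same fact already used to construct the localization $\mathbf{SchFin}_\qc$ earlier in the paper. In particular, if $g \overset{\qc}{=} f$ and $f \overset{\qc}{=} f'$ then $g \overset{\qc}{=} f'$.

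With these two ingredients the lemma is immediate: given $f$ pro-locally $\mathbf{P}$ with witness $f'$ and any $g$ with $g \overset{\qc}{=} f$, transitivity yields $g \overset{\qc}{=} f'$, so the very same $f'$ certifies that $g$ is pro-locally $\mathbf{P}$. I do not expect a genuine obstacle here; this is a bookkeeping result whose purpose is to record that Definition \ref{defn prolocally P} has been stated invariantly from the outset. The only subtle point to guard against is reading the condition ``$\mathbf{P}(\Spec(f'^{-1}(U_{y'})) \to \Spec(U_{y'}))$ for all $y' \in Y'$'' as selecting a distinguished $f'$ canonically attached to $f$ rather than as a condition on some qc-equivalent $f'$; once this is clarified, the proof is essentially one line.
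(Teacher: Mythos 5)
Your proposal is correct and matches the paper's intent exactly: the paper offers no argument beyond ``By construction,'' and the construction in question is precisely the one you identify, namely that Definition \ref{defn prolocally P} quantifies existentially over a qc-equivalent witness $f'$, so invariance under qc-equivalence follows from transitivity of the relation $\overset{\qc}{=}$ (which holds because equivalence of arrows in $\mathbf{SchFin}_{\qc}$ is an equivalence relation). Nothing further is needed.
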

\begin{lem}
A morphism $f\colon X\to Y$ is pro-locally proper if and only if it is pro-locally of finite presentation and v-proper.
\end{lem}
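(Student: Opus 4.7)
The strategy is to reduce both conditions in the statement to scheme-theoretic properties of the local pieces
\[
g_{y'}\colon \Spec(f'^{-1}(U_{y'}))\longrightarrow \Spec(U_{y'})
\]
for a chosen representative $f'\colon X'\to Y'$ of $f$ witnessing that $f$ is a scheme fibration, and then to invoke the classical characterization of proper morphisms. Since being pro-locally $\mathbf{P}$ is geometric by construction and v-properness is geometric by Propositions \ref{prop localization valuations} and \ref{cor valuation schemes and schematic}, nothing is lost in passing from $f$ to $f'$.

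Recall that, for a morphism of locally Noetherian schemes, being proper is equivalent to being of finite type---hence of finite presentation in this setting---together with satisfying both parts of the valuative criterion of properness with respect to all discrete valuation rings. Applied to each $g_{y'}$, this identifies \emph{$f$ pro-locally proper} with the conjunction of \emph{$f$ pro-locally of finite presentation} and \emph{every $g_{y'}$ satisfies the valuative criterion of properness as a scheme morphism}. It therefore suffices to prove the local-global statement that $f$ is v-proper if and only if every $g_{y'}$ satisfies the valuative criterion of properness as a scheme map.

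For this, the key input is a factorization lemma: any schematic morphism $h\colon \Spec(A)\to Y'$ with $A$ a field or DVR factors through a single affine open $U_{y'}\subseteq Y'$. Indeed, writing $\Spec(A)=\{0\leq 1\}$ and using that $h$ is a poset map, we have $h(0)\leq h(1)$, so both points lie in $U_{h(0)}$, which is affine with $\Spec(U_{h(0)})=\Spec(\OO_{Y',h(0)})$. Analogous factorizations hold compatibly for morphisms $\Spec(A)\to X'$ and $\Spec(\Sigma)\to X'$ lying over a common open of $Y'$, by continuity of the centre maps and Definition \ref{definition schematic morphism}. Together with the bijections of Proposition \ref{cor valuation schemes and schematic}, this shows that each instance of the valuative lifting/uniqueness problem for $f$ over $Y$ takes place entirely inside one $g_{y'}$, identifying the two sides of the local-global equivalence term by term.

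The principal obstacle is the careful handling of the factorization argument across an entire valuative diagram: one must verify that all four schematic morphisms appearing in such a diagram descend compatibly to a single $g_{y'}$. This follows from the existence of the centre map $\pi_{Y'}$ (Theorem \ref{theorem characterization schematic}), the affineness of the opens $U_{y'}$, and the centrality condition built into Definition \ref{definition schematic morphism}. Everything else is formal bookkeeping between the categories $\mathbf{SchFin}$ and $\mathbf{Sch}$, organized through Propositions \ref{prop localization valuations} and \ref{cor valuation schemes and schematic}.
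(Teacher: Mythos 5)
Your argument is correct and is precisely the reasoning the paper leaves implicit: the lemma is stated there without proof, being regarded as an immediate consequence of the characterization of a proper morphism of locally Noetherian schemes as one of finite type satisfying both parts of the valuative criterion, combined with the observation that every valuative diagram over $Y$ localizes to a single piece $g_{y'}$ because a monotone map $\{0\leq 1\}\to Y'$ lands in $U_{h(0)}$ and the bijections of Propositions \ref{prop localization valuations} and \ref{cor valuation schemes and schematic} translate the lifting problem into scheme-theoretic terms. Your factorization lemma and the appeal to geometric invariance to fix a single witnessing representative $f'$ supply exactly the details needed to justify the statement.
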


In particular, Definition \ref{defn prolocally P} produces a notion of morphisms \textit{pro-locally of finite presentation}---or finite type---that is geometric. Now we see that under some additional hypothesis, such statement can be translated to a local one in a vein analogue to scheme theory, purely in terms of ring maps and schematic morphisms, i.e. to a discrete statement.

\begin{thm}\label{theorem prolocal}
A separated morphism $f\colon X\to Y$ with $Y$ a separated space is pro-locally of finite presentation if and only if it admits finite coverings by flat immersions $\{U_i\to X\}_{i\in I}$ and $\{V_j^i\to U_i\times_YX\}_{j\in J_i}$ such that the ring maps $\OO(U_i)\to \OO(V_j^i)$ are of finite presentation for all $i, j$.
\begin{proof}
If $f$ is pro-locally of finite presentation, since it is a scheme fibration, there is a diagram
\begin{align*}
\xymatrix{ 
X'\ar[d]_{f'} & X''\ar[l]\ar[r]\ar[d]_{f''} & X\ar[d]^f\\
Y' & Y''\ar[l]\ar[r] & Y
}
\end{align*}
where the horizontal arrows are qc-isomorphisms and $f'$ is as in Definition \ref{defn prolocally P}. Note that we are allowed to assume that $f''$ exists---which is not a consequence of the mere definition of qc-equivalence---because, if it did not, we could simply replace $X''$ by $X''\times_X(Y''\times_YX)$ and the suitable morphisms. Now, we can consider the covering $\{U_{y'}\to Y'\}_{y\in Y'}$ of $Y'$, which induces a covering $\{U_{y'}\times_{Y'}Y''\to Y''\to Y\}_{y'\in Y'}$ of $Y$. As for the second covering, since $\Spec(f^{-1}(U_{y'}))$ are schemes locally of finite presentation over the base for all $y'$, we can take finite models $X'_{y'}$ of each $f'^{-1}(U_{y'})$ and obtain affine covers $\{U_{x'}\subseteq X_{y'}'\to X'\}_{x'\in X'_{y'}}$ such that $\OO_{Y', y'}\to \OO_{X'_{y'}, x'}$ are of finite presentation. Base changing these to $X''$ and composing with the qc-isomorphism $X''\to X$ as we did before, we win.

For the converse, let us see that, if coverings as in the statement exist, there is a finite affine covering by flat immersions $\{W_k\to Y\}_k$ such that $\Spec(W_k\times_YX)$ are \textit{schemes} locally of finite presentation over $\Spec(W_k)$ for all $k$. Then, Construction \ref{const finite model flat imm} would complete the proof. 

We proceed with the definition of $\{W_k\to Y\}_{k\in K}$. Set $K=\PP^*(I)$ and $W_k=\U(\Delta)\to Y$---the images of the nerve $\U$ as in Proposition \ref{prop nerve}---. By the hypothesis, $\U(\Delta)\times_YX$ admits a cover $\{V^\Delta_j\to \U(\Delta)\times_YX\}_{j\in J_\Delta}$, where $V^\Delta_j=\prod_{i\in \Delta}V^i_j$---fibered product over $X$---and $J_\Delta=\prod_iJ_i$. We claim that, since both $Y$ and $f$ are separated, $\OO(\U(\Delta))\to \OO(V_j^\Delta)$ remain of finite presentation for each $\Delta$ and all $j$. 
Indeed, the separatedness conditions imply that any tensor product $\OO(V_j^i\times_X V_{j'}^{i'})$ is a quotient of $\OO(V_j^i)\otimes_{\OO(Y)}\OO(V_{j'}^{i'})$ and, similarly, $\OO(U_i\times_YU_{i'})$ is a quotient of $\OO(U_i)\otimes_{\OO(Y)}\OO(U_{i'})$. One obtains commutative diagrams of rings
\begin{align*}
\xymatrix{ 
\OO(V_j^i\times_X V_{j'}) & \OO(V_j^i)\otimes_{\OO(Y)}\OO(V_{j'}^{i'})\ar[l]\\
\OO(U_i\times_YU_{i'})\ar[u]  & \OO(U_i)\otimes_{\OO(Y)}\OO(U_{i'})\ar[u] \ar[l]
}
\end{align*}
where the horizontal arrows are quotients---hence \textit{surjective}---and the right vertical arrow is of finite presentation due to being a product of two such maps. An easy commutative algebra exercise proves that the remaining arrow is of finite presentation (no Noetherian hypothesis required).

Note that, by construction, the restriction morphisms between the $\OO(V_j^\Delta)$ as $\Delta$ varies are also flat epimorphisms of finite presentation, thus they induce open immersions of affine schemes. In particular, since using Construction \ref{const finite model flat imm} we can define a finite model of each $\U(\Delta)\times_YX$ whose stalk rings are the $\OO(V_j^\Delta)$, the standard \textit{recollement} argument for schemes and open immersions proves that $\Spec(\U(\Delta)\times_YX)$ is a scheme for all $\Delta$, and one of finite presentation over $\Spec(\U(\Delta))$ by construction, as desired.
\end{proof}
\end{thm}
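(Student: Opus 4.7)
The plan is to prove both implications by moving between Definition \ref{defn prolocally P}---phrased in terms of a witness span of qc-isomorphisms---and the explicit covering data. The bridge is Construction \ref{const finite model flat imm}: a cover by flat immersions of a separated space assembles into a cylinder whose stalk rings are the rings of global sections of the finite intersections, and this cylinder is qc-equivalent to the original space. The other essential input is Remark \ref{remark separated morphisms}: under the separatedness hypotheses, fibered products over $Y$ and over $X$ of affine pieces have global sections that are \emph{quotients} of the corresponding tensor products, which is the mechanism by which finite presentation at the level of cover-rings will propagate to all intersections.

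For the forward direction, assume $f$ is pro-locally of finite presentation. Unwinding the definition, there is a diagram of qc-isomorphisms $X \leftarrow X'' \to X'$ compatible with an analogous one $Y \leftarrow Y'' \to Y'$, with $f'\colon X'\to Y'$ as in Definition \ref{defn prolocally P}. (One can ensure the common refinement $X''$ exists by replacing it with $X''\times_X(Y''\times_YX)$ if necessary.) Over each $y'\in Y'$, the map $\Spec(f'^{-1}(U_{y'}))\to \Spec(U_{y'})$ is a scheme morphism locally of finite presentation, so we can choose an affine open cover whose rings $B_\alpha^{y'}$ are of finite presentation over $\OO_{Y',y'}$; each affine open arises as the spectrum of an affine schematic subspace whose inclusion into $f'^{-1}(U_{y'})$ is a flat immersion. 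Transporting the covers $\{U_{y'}\}_{y'}$ and these affine refinements backwards through the span yields the required families $\{U_i\to X\}$ and $\{V_j^i\to U_i\times_YX\}$ with the desired finite-presentation property at the stalk-ring level.

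For the converse, assume the covers are given. Form the nerve $\U$ of $\{U_i\to X\}$ (Proposition \ref{prop nerve}) and define $V_j^\Delta:=\prod_{i\in\Delta}V_j^i$ (fibered over $X$), so that $\{V_j^\Delta\to \U(\Delta)\times_YX\}$ is a covering of each intersection. Apply Construction \ref{const finite model flat imm} simultaneously to $Y$ and to all the $\U(\Delta)\times_YX$ to produce affine finite models $Y'$ and $X'$ with stalk rings $\OO(\U(\Delta))$ and $\OO(V_j^\Delta)$, respectively, and a morphism $f'\colon X'\to Y'$ qc-equivalent to $f$. To verify Definition \ref{defn prolocally P}, one checks that the transition maps between the $\OO(V_j^\Delta)$ are flat epimorphisms of finite presentation---hence correspond to open immersions of affine schemes---so a standard scheme-theoretic recollement identifies $\Spec(f'^{-1}(U_{y'}))$ with a scheme locally of finite presentation over the corresponding affine base.

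The main obstacle is showing that $\OO(\U(\Delta))\to \OO(V_j^\Delta)$ stays of finite presentation. By Remark \ref{remark separated morphisms}, iterated use of the separatedness hypotheses gives that $\OO(V_j^\Delta)$ is a quotient of $\bigotimes_{i\in\Delta}\OO(V_j^i)$ over $\OO(Y)$, and similarly $\OO(\U(\Delta))$ is a quotient of $\bigotimes_{i\in\Delta}\OO(U_i)$. The tensor product of finitely presented ring maps is finitely presented, so one is left with a commutative square in which the horizontal maps are surjective and one vertical map is finitely presented; an elementary commutative-algebra argument---no Noetherian hypothesis needed---then forces the remaining vertical map to be finitely presented as well. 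Most of the rest of the proof is combinatorial bookkeeping to glue these local data across all $\Delta$ and to match up the resulting finite model with the abstract definition.
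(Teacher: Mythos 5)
Your proposal is correct and follows essentially the same route as the paper's own proof: the forward direction transports affine covers of the finite models of the scheme fibers back through the span of qc-isomorphisms, and the converse builds the nerve, uses the separatedness hypotheses (Remark \ref{remark separated morphisms}) to realize $\OO(V_j^\Delta)$ and $\OO(\U(\Delta))$ as quotients of tensor products so that finite presentation propagates, and concludes by recollement and Construction \ref{const finite model flat imm}. No substantive differences to report.
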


In other words, if we consider the morphism of locally ringed spaces $\Spec(f)\colon\Spec(X)\to \Spec(Y)$ represented by $f$, Definition \ref{defn prolocally P} for the finite presentation property states that there exists \textit{a certain} covering by flat monomorphisms of $\Spec(Y)$ with respect to which $\Spec(f)$ is "locally of finite presentation" in the ordinary sense of the expression. However, we cannot arbitrarily replace this cover by another one, because the transition morphisms involved in such process are flat monomorphisms and thus not "compatible" with properties that are not pro-local.

Consequently, it makes sense to give the following definition for schemes. 
\begin{defn}\label{prolocal schemes}
A morphism of schemes $g\colon S\to T$ is pro-locally proper if there exists a pro-locally proper schematic morphism $f\colon X\to Y$ such that $\Spec(f)\simeq g$---i.e. there are isomorphisms of schemes making the suitable diagram commutative---.
\end{defn}
The following result follows from Theorem \ref{theorem prolocal}. 
\begin{prop}\label{prop proplocally proper schemes}
If $g\colon S\to T$ is pro-locally of finite presentation (resp. proper), it admits a finite affine covering $\{U_i\to T\}$ by flat monomorphisms such that $U_i\times_TS\to U_i$ are morphisms locally of finite presentation (resp. proper). The converse holds if $g$ and $T$ are separated.
\end{prop}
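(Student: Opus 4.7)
The plan is to unfold Definitions \ref{prolocal schemes} and \ref{defn prolocally P} on each side of the equivalence, using Proposition \ref{prop commutation} to transfer fibered products between the schematic and locally ringed settings, and Construction \ref{const finite model flat imm} together with Theorem \ref{theorem prolocal} to assemble the required schematic model on the converse direction.

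For the forward direction, suppose $g \simeq \Spec(f)$ with $f\colon X\to Y$ pro-locally of finite presentation (resp. proper). By Definitions \ref{defn relative scheme} and \ref{defn prolocally P}, there exists $f'\colon X'\to Y'$ qc-equivalent to $f$ with $\Spec(f'^{-1}(U_{y'})\to U_{y'})$ locally of finite presentation (resp. proper) for every $y'\in Y'$. The minimal opens $\{U_{y'}\to Y'\}_{y'\in Y'}$ form a finite cover by flat immersions, and each $U_{y'}$ is affine, having minimum $y'$ with $\OO(U_{y'})=\OO_{X',y'}$, so its spectrum is an affine scheme. Applying $\Spec$ and using $\Spec(Y')\simeq T$ yields the sought finite affine cover $\{\Spec(U_{y'})\to T\}$ by flat monomorphisms. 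Proposition \ref{prop commutation} identifies $\Spec(U_{y'})\times_T S\simeq\Spec(f'^{-1}(U_{y'}))$, and the map to $\Spec(U_{y'})$ has the required property. In the proper case, the v-proper part of the hypothesis translates to $g$ satisfying the full valuative criterion, which is stable under the flat base change; quasi-compactness of the source follows from $f'^{-1}(U_{y'})$ being a finite schematic space whose spectrum is a scheme, while separation descends from v-separation of $g$. Combined with locally of finite presentation, we conclude propriety of the restrictions.

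For the converse, assume $g$ and $T$ are separated and that we have an affine cover $\{U_i\to T\}$ by flat monomorphisms such that each $U_i\times_T S\to U_i$ is locally of finite presentation (resp. proper). Since being locally of finite presentation is local for the Zariski topology, each $U_i\times_T S$ admits a finite affine open cover $\{V^i_j\to U_i\times_T S\}$ with $\OO(U_i)\to\OO(V^i_j)$ of finite presentation. These two families, viewed as covers by flat monomorphisms in $\mathbf{LRS}$, can be promoted via Construction \ref{const finite model flat imm}---whose applicability relies precisely on the separation hypotheses---to schematic spaces $Y$ and $X$ modelling $T$ and $S$ respectively, together with a morphism $f\colon X\to Y$ whose spectrum recovers $g$. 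The covers then realize the hypotheses of Theorem \ref{theorem prolocal}, so $f$ is pro-locally of finite presentation. In the proper case, one additionally checks that $f$ is v-proper, which reduces via the geometric nature of v-properness to $g$ satisfying the valuative criterion of properness; this in turn follows because properness is flat-local on the target, so the properness of each $U_i\times_T S\to U_i$ propagates to $g$.

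The main obstacle will be verifying that Construction \ref{const finite model flat imm} faithfully reproduces not just the scheme $T$ but the morphism $g$ itself, which requires compatibility between the chosen covers of $T$ and $S$ and careful bookkeeping of the induced qc-equivalences. In the proper case of the forward direction, the subtlety lies in assembling the separate pieces---valuative criterion, finite presentation, and quasi-compactness on both source and target---to upgrade to genuine properness of each restriction; this is where the scheme-fibration structure of Definition \ref{defn relative scheme} plays its essential role.
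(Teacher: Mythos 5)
Your proposal is correct and follows essentially the same route as the paper, which obtains the forward direction by unwinding Definition \ref{defn prolocally P} along the minimal-open cover $\{U_{y'}\}$ of the auxiliary model and the converse from Construction \ref{const finite model flat imm} together with Theorem \ref{theorem prolocal}. The only inefficiency is in the forward proper case: Definition \ref{defn prolocally P} applied with $\mathbf{P}=$ \emph{proper} already yields properness of each $\Spec(f'^{-1}(U_{y'}))\to\Spec(U_{y'})$ directly, so the reassembly from v-properness, quasi-compactness and finite presentation is not needed.
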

\begin{rem}
\textit{A priori}, the converse of Proposition \ref{prop proplocally proper schemes} may not hold in full generality, because we do not have a way of constructing finite models with respect to coverings by flat monomorphisms. This is tied with deep questions about the theory of schematic spaces and the associated spectra, which we call proschemes in Section \ref{section cohomology}. In the separated case, it holds via Construction \ref{const finite model flat imm}.
\end{rem}

\subsection{Algebraic morphisms}\label{section algebraic}

Inspired the idea of representable morphisms in the context of algebraic spaces or algebraic stacks, we briefly describe an alternative way of defining "proper" morphisms of schematic spaces and, in general, extending any definition of scheme theory to the schematic setting.
\begin{defn}\label{definition algebraic schematic}
We say that a schematic morphism  $f\colon X\to Y$ is \textit{algebraic} if for any schematic morphism $Z\to Y$ such that $\Spec(Z)$ is a scheme, $\Spec(Z\times_YX)$ is a scheme. Similarly, we say that a space $X$ is algebraic if its diagonal map $X\to X\times X$ is algebraic. Given a property $\mathbf{P}$ of morphisms of schemes that is 1) stable under arbitrary base change, and 2) local in the fpqc topology; we say that $f$ is \textit{algebraically $\mathbf{P}$} if it is algebraic and, for all $Z\to Y$, it holds that $\mathbf{P}(\Spec(Z\times_YX)\to\Spec(Z))$. 
\end{defn}
The upside of this definition is that it is geometric by construction; the obvious downside is that it requires us to invoke the $\Spec$ functor, making it not intrinsic to the world of schematic spaces, and that it requires us to check a property over \textit{all $Z$ such that $\Spec(Z)$ is a scheme}, including spaces $Z$ that are not finite models of schemes. This last point is essential for the Definition to work as intended, but it makes algebraicity a very strong condition to check in practice. We continue with some facts and remarks about algebraic morphisms, without proofs. Basic properties are straightforward thanks to Proposition \ref{prop commutation}. Let $f\colon X\to Y$ be a schematic morphism:
\begin{itemize}
\item If $f$ is algebraic, then it is a scheme fibration. Although we strongly believe it to be false, it is open whether or not the converse holds; which, if it were true, would imply that all morphisms pro-locally of finite presentation are algebraic and such condition would be checked via Theorem \ref{theorem prolocal}. If it were true, its proof should be reminiscent of similar results on classical algebraic spaces, but without the finite presentation property of étale morphisms, those methods cannot be reproduced in our context.

\item If $\Spec(f)$ is a morphism of schemes---which makes $f$ automatically algebraic---, then $\mathbf{P}(\Spec(f))$ if and only if $f$ is algebraically $\mathbf{P}$. In particular, \textit{algebraic isomorphisms are exactly the qc-isomorphisms.}

\end{itemize}

For a complete exposition, we prove that, in spite of the first point above, \textit{all affine morphisms} are algebraic, which allows one to assume algebraicity without loss of generality in some contexts---for instance, studying finite covers of schematic spaces---. This relies heavily upon Theorem \ref{theorem b422} and related methods introduced in Section \ref{section cohomology} below, so the reader should skip the following proof and come back to it at the end.
\begin{thm}
An schematic morphism $f\colon X\to Y$ is affine if and only if it is algebraically affine.
\begin{proof}
The non-trivial part is the \textit{only if} implication.	 It suffices to prove that, if $S=\Spec(Z)$ is a scheme and $\A$ a quasi-coherent $\OO_Z$-algebra, then one has an isomorphism of spaces over $S$
\begin{align*}
\Spec((Z, \A))\simeq \Spec_S(\widehat{\A});
\end{align*}
where $(Z, \A)$ is the schematic space obtained by replacing the structure sheaf $\OO_Z$ with $\A$---this plays the role of the relative spectrum in the schematic world---and $\Spec_S$ denotes the ordinary relative spectrum of scheme theory. Indeed, if this holds, since $f$ is affine, it is qc-isomorphic to $(Y, f_*\OO_X)\to Y$ by Stein factorization for affine morphisms, see \cite[]{fernando schemes}; so for any $g\colon Z\to Y$ one has that $Z\times_YX$ is qc-isomorphic to $Z\times_Y(Y, f_*\OO_X)\simeq (Z, g^*f_*\OO_X)$, thus $$\Spec(Z\times_YX)\simeq \Spec((Z, g^*f_*\OO_X))\simeq \Spec_S(\widehat{g^*f_*\OO_X}),$$ which is a scheme, as desired.

To prove the initial claim, first assume that $Z$ has restrictions of finite presentation. Then $\pi_Y\colon S\to Z$ is continuous and, by Theorem \ref{theorem characterization schematic}, its image $\pi_Z(S)$ with the induced sheaf of rings is another schematic space qc-isomorphic to $Z$---in future work we will call this \textit{the concise space associated to $Y$}---, so we may assume that $\pi_Z$ is surjective. Note that $\widehat{\A}=\pi_Z^*\A$ and that both spaces in the statement are now defined by the same colimit. For a general $Z$, Corollary \ref{corollary change model} reduces the proof to the case we have just discussed. Some minor details omitted.
\end{proof}
\end{thm}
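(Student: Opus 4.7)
The ``if'' direction (algebraically affine $\Rightarrow$ affine) is a direct unpacking of the definitions: for each $y \in Y$ I would apply the algebraic-affineness hypothesis to the flat immersion $U_y = \Spec(\OO_{Y,y}) \to Y$, which is a schematic morphism whose source has spectrum the affine scheme $\Spec(\OO_{Y,y})$; the hypothesis then forces $\Spec(f^{-1}(U_y)) = \Spec(U_y \times_Y X)$ to be affine over $\Spec(U_y)$, hence itself affine, and Proposition \ref{prop affine characterization} delivers affineness of $f^{-1}(U_y)$.

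For the ``only if'' direction my plan is to follow the author's blueprint and reduce everything to the claim that, for any schematic space $Z$ with $\Spec(Z) = S$ a scheme and any quasi-coherent $\OO_Z$-algebra $\A$, there is a natural $S$-isomorphism
\[
\Spec((Z, \A)) \;\overset{\sim}{\longrightarrow}\; \Spec_S(\widehat{\A}).
\]
Granting this, Stein factorization for affine morphisms yields a qc-equivalence $f \overset{\qc}{=} ((Y, f_*\OO_X) \to Y)$, and for any $g\colon Z \to Y$ with $\Spec(Z) = S$ a scheme the base change reads $Z \times_Y X \overset{\qc}{=} (Z, g^*f_*\OO_X)$, whose spectrum identifies with $\Spec_S(\widehat{g^*f_*\OO_X})$, an affine $S$-scheme, as required.

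To establish the key isomorphism, I would first restrict to schematic spaces $Z$ whose restriction maps are of finite presentation. In that regime the prime spectra $U_z = \Spec(\OO_{Z,z})$ embed as open affines of $S$, and Theorem \ref{theorem characterization schematic} provides a centre map $\pi_Z\colon S \to Z$ which is continuous in the Zariski topology. Replacing $Z$ by its image $\pi_Z(S)$ — which is qc-isomorphic to $Z$ by Theorem \ref{theorem characterization schematic} and serves as a ``concise'' model of $S$ — I may assume $\{U_z\}_{z \in Z}$ is an open cover of $S$. On each $U_z$ one has $\widehat{\A}|_{U_z} = \widetilde{\A_z}$, so both sides of the displayed isomorphism coincide with the colimit $\colim_{z \in Z} \Spec(\A_z)$ along the same system of transition maps induced by the restriction morphisms of $\A$, which settles the claim in this reduced case.

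For a general $Z$, I would appeal to Corollary \ref{corollary change model} to replace $Z$ with a qc-isomorphic schematic space whose restriction maps are of finite presentation, transporting $\A$ along the qc-isomorphism; both sides of the key isomorphism are insensitive to this replacement. The main obstacle I anticipate is precisely this last reduction step: verifying that pullback of a quasi-coherent algebra along a qc-isomorphism is compatible with the formation of $\widehat{(-)}$ on $S$, so that the colimit presentation above carries over unchanged. By contrast, the Stein-factorization manoeuvre and the colimit identification itself are essentially formal once an open affine cover of $S$ by the $U_z$'s is available.
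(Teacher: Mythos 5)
Your proposal is correct and follows essentially the same route as the paper's proof: the same reduction to the key isomorphism $\Spec((Z,\A))\simeq \Spec_S(\widehat{\A})$, the same use of Stein factorization for affine morphisms and base change, the same passage to the finite-presentation case via the centre map and the concise model, and the same appeal to Corollary \ref{corollary change model} for general $Z$. Your explicit treatment of the ``if'' direction via Proposition \ref{prop affine characterization}, and your flagging of the compatibility of $\widehat{(-)}$ with qc-isomorphisms as the remaining detail, are both consistent with what the paper leaves implicit.
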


\section{Preservation of cohomology and proschemes}\label{section cohomology}

In this section we discuss a rather subtle problem that arises when one tries to extend cohomological results from scheme theory to schematic spaces. Let $X$ be a schematic space and assume that $\Spec(X)$ is a scheme. In general, $X$ is not necessarily a \textit{finite model} of a scheme in the sense of the introduction: we shall interpret it as a \textit{generalized finite model with respect to a cover by flat monomorphisms.} There is an \textit{ad hoc} way to define an equivalence of abelian categories $\Qcoh(X)\simeq \Qcoh(\Spec(X))$---restricting, in fact, to $\Coh(X)\simeq \Coh(\Spec(X))$ for \textit{any} schematic space---, but this equivalence isn't directly given by taking direct image with respect to a continuous map---or morphism of sites---in the ordinary sense, because the centre map $\pi_X$ of Theorem \ref{theorem characterization schematic} is not continuous for the Zariski topology unless $X$ has restriction maps of finite presentation. The consequence is that the $i$-th cohomology functors $H^i(\Phi)\colon \Qcoh(\Spec(X))\to \Qcoh(X)$, that are trivial for $i>0$, do not verify that $H^i(\Phi)(\M)$ is the sheafification $U\mapsto H^i(V, \M)$ for open subset $V\subseteq \Spec(X))$. In other words, exactness does not automatically imply preservation of cohomology. 

To solve this we are going to define the equivalence above in a roundabout way by means of the following results ---that we believe to hold in greater generality that the one explained here---. In what follows, let $\mathbf{AlgSchFin}$ denote the subcategory of algebraic schematic spaces and morphisms and let $\mathbf{AlgPSch}$ denote the subcategory of locally ringed spaces of \textit{algebraic proschemes}---that we will rigorously define in Definition \ref{definition proschemes}---. The proof of Theorem \ref{theorem b422} is very technical and provided at the end of the section.
\begin{thm}\label{theorem b422}
The restriction of the $\Spec$ functor $$\Spec\colon \mathbf{AlgSchFin}_{\qc}\to \mathbf{AlgPSch}$$ is an equivalence of categories.
\end{thm}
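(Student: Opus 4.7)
The plan is to verify faithfulness, fullness, and essential surjectivity separately. Faithfulness is built into the construction: the localization $\mathbf{SchFin}\to \mathbf{SchFin}_{\qc}$ was defined precisely by inverting the qc-isomorphisms, so $\Spec$ descends to a faithful functor on $\mathbf{SchFin}_{\qc}$ and \emph{a fortiori} on the full subcategory $\mathbf{AlgSchFin}_{\qc}$. Only essential surjectivity onto $\mathbf{AlgPSch}$ and fullness on algebraic morphisms require real work.

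For essential surjectivity, I would start with an algebraic proscheme $P$ and extract, from the promised Definition \ref{definition proschemes}, a finite covering $\{\Spec(A_i)\to P\}_{i\in I}$ by flat monomorphisms from affine schemes whose iterated fibered products over $P$ remain affine schemes. Assembling these into a nerve functor $\U\colon \PP^*(I)\to \mathbf{SchFin}^{\op}$ sending $\Delta$ to the affine schematic space with global-sections ring $A_\Delta$, where $\Spec(A_\Delta)=\prod_{i\in \Delta}\Spec(A_i)$ (fibered product over $P$), Construction \ref{const finite model flat imm} together with Proposition \ref{prop nerve} give that $X:=\Cyl(\U)$ is schematic. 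Commutation of $\Spec$ with fibered products (Proposition \ref{prop commutation}) and with the cylinder colimit then identifies $\Spec(X)$ with $P$. Algebraicity of $X$ in the sense of Definition \ref{definition algebraic schematic} is a bookkeeping exercise using Proposition \ref{prop commutation} and the fact that every base change $Z\to X$ with $\Spec(Z)$ a scheme reduces, on each affine piece $\U(\Delta)$, to a base change in $\mathbf{Sch}$ which stays in $\mathbf{Sch}$ thanks to the algebraicity of $P$.

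For fullness, given an algebraic morphism $g\colon \Spec(X)\to \Spec(Y)$ between the spectra of algebraic schematic spaces, I would fix an affine flat cover $\{V_j\to Y\}_{j\in J}$; by algebraicity of $Y$, each $V_j\times_YX$ lifts via Proposition \ref{prop commutation} to the scheme $\Spec(V_j)\times_{\Spec(Y)}\Spec(X)$, and refining by affine flat immersions produces collections of ring homomorphisms $\OO(V_j)\to \OO(W_j^k)$ that encode $g$ locally. Replacing $X$ and $Y$ by the cylinder finite models attached to these covers through Construction \ref{const finite model flat imm}---both qc-isomorphic to the originals---these ring maps assemble into a morphism of data, hence a schematic morphism $f$ whose spectrum is identified with $g$ via Proposition \ref{prop commutation}. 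This produces the desired lift in $\mathbf{AlgSchFin}_{\qc}$.

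The hardest part will be the combinatorial gluing in fullness: the locally-defined ring maps must be checked to commute on overlaps, which requires a simultaneous refinement of the chosen covers on source and target together with separatedness-type control over intersections in the spirit of Remark \ref{remark separated morphisms}. A second delicate point is pinning down the precise form of Definition \ref{definition proschemes}: one needs ``algebraic proscheme'' to be exactly the class of locally ringed spaces admitting ``affine-in-affine'' finite covers by flat monomorphisms, for only then does the nerve construction of the essential-surjectivity step land inside $\mathbf{AlgSchFin}$ rather than merely inside $\mathbf{SchFin}$.
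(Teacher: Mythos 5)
Your faithfulness argument matches the paper's, and your overall strategy for fullness (localize on the target, use algebraicity to turn the pieces into schemes, take finite models, reassemble) is in the right spirit. But there are two genuine gaps. First, for essential surjectivity you are solving the wrong problem: in the paper a proscheme is \emph{by definition} a locally ringed space of the form $\Spec(X)$ for a schematic $X$, and algebraicity of a proscheme is defined via the existence of a presentation whose diagonal is algebraic, so essential surjectivity is essentially tautological. Your nerve-based reconstruction of a presentation from an ``affine-in-affine'' cover is not needed, and it rests on a guessed form of Definition \ref{definition proschemes} that does not match the actual one (you flag this yourself).

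Second, and more seriously, your fullness argument leaves its hardest step unproven and cites the wrong tool for it. Construction \ref{const finite model flat imm} builds a finite model from a flat-immersion cover of a space that is \emph{already schematic} (and separated); what fullness requires is to manufacture a schematic space out of a \emph{locally ringed space} (here $\Spec(X)$, or the schemes $g^{-1}(\Spec(U_y))$) equipped with a finite cover by flat monomorphisms that are not Zariski-open. This is exactly the content of the paper's Technical Lemmas \ref{lemma tech 1} and \ref{lemma tech 2}: one shows that a cover of an affine scheme by flat monomorphisms whose iterated intersections are affine yields a genuine schematic finite model with the correct spectrum, and the other shows such covers exist refining any given flat-monomorphism cover without any semiseparatedness hypothesis. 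Moreover, the ``combinatorial gluing on overlaps'' that you identify as the hardest part is never carried out in your proposal; the paper avoids it entirely by first reducing to the case where $\Spec(X)$ is affine (where the refined cover produces the roof $X\leftarrow Z'\to Y$ directly) and then handling general $X$ through the colimit presentation $X\simeq\colim_x U_x$, so that the compatibility of the local lifts is absorbed into the bijection $\Hom(\colim_x U_x,Y)\simeq\lim_x\Hom(\Spec(U_x),\Spec(Y))$. Without that reduction, or a proof of your gluing claim, the argument is incomplete.
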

With this, it is immediate to prove the following:
\begin{cor}\label{corollary change model}
If $X$ is a schematic space such that $\Spec(X)$ is a scheme, there are schematic spaces $Z$ and $T$ such that $Z$ has restrictions of finite presentation and qc-isomorphisms $Z\leftarrow T\to X$.
\begin{proof}
Note that any such $X$ is algebraic. Let $Z$ be any finite model of the scheme $\Spec(X)$ and let $\mathrm{\Psi}\colon \Spec(Z)\to \Spec(X)$ be the natural isomorphism, which will trivially be algebraic. Theorem \ref{theorem b422} implies that $\Psi=\Spec(f/\phi)$ for some qc-isomorphisms $f\colon T\to X$ and $\phi\colon T\to Z$, so we win.
\end{proof}
\end{cor}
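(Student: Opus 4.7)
The plan is to reduce everything to an application of Theorem \ref{theorem b422}. First I would verify that $X$ lies in $\mathbf{AlgSchFin}$. Since $\Spec(X)$ is a scheme, Proposition \ref{prop commutation} shows that $\Spec(X\times X)\simeq \Spec(X)\times \Spec(X)$ is also a scheme, and the base change of the diagonal $X\to X\times X$ by any $Z\to X\times X$ with $\Spec(Z)$ a scheme computes, via the same Proposition, to the ordinary fibered product of schemes $\Spec(Z)\times_{\Spec(X)\times\Spec(X)}\Spec(X)$, which is a scheme. Hence $X$ is algebraic in the sense of Definition \ref{definition algebraic schematic}.

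Next I would produce $Z$ as any ordinary finite model of the qc-qs scheme $\Spec(X)$, obtained from a choice of a finite affine open cover in the manner recalled in the Introduction. By construction the stalk rings of $Z$ are rings of global sections of finite intersections of affine opens of $\Spec(X)$ and the restriction maps are localizations, hence flat ring epimorphisms of finite presentation; in particular $Z\in \mathbf{AlgSchFin}$ and there is a canonical isomorphism of schemes $\Psi\colon \Spec(Z)\overset{\sim}{\to} \Spec(X)$, which is an (algebraic) isomorphism in $\mathbf{AlgPSch}$.

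Now I invoke Theorem \ref{theorem b422}: the functor $\Spec\colon \mathbf{AlgSchFin}_{\qc}\to \mathbf{AlgPSch}$ is an equivalence, so $\Psi$ is the image of a uniquely determined isomorphism $\alpha\colon Z\to X$ in $\mathbf{AlgSchFin}_{\qc}$. Since $\mathbf{AlgSchFin}_{\qc}$ is the localization of $\mathbf{AlgSchFin}$ at the left multiplicative system of qc-isomorphisms, $\alpha$ is represented by a roof $Z\xleftarrow{\phi} T\xrightarrow{f}X$ with $\phi$ a qc-isomorphism; and since $\alpha$ is an isomorphism in the localized category, the remaining leg $f$ must also be a qc-isomorphism (it has a two-sided inverse in $\mathbf{AlgSchFin}_{\qc}$, which by the calculus of fractions forces $f$ to become invertible, i.e.\ to lie in the saturated system of qc-isomorphisms). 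This delivers the desired diagram $Z\leftarrow T\to X$.

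The only real obstacle is Theorem \ref{theorem b422} itself; once its full faithfulness and essential surjectivity are granted, the argument is purely formal, resting on the standard roof calculus for Verdier localizations at a left multiplicative system. The conceptual content is that finite models with Zariski-localization restrictions form a skeleton for $\mathbf{AlgSchFin}_{\qc}$ over any schematic space whose spectrum is already a scheme, and Theorem \ref{theorem b422} is precisely what allows one to jump between an abstract ``generalized finite model'' $X$ and the classical one $Z$ through a common refinement $T$.
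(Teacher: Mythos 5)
Your proposal is correct and follows essentially the same route as the paper: observe that $X$ is algebraic, take $Z$ to be an ordinary finite model of the scheme $\Spec(X)$, and apply Theorem \ref{theorem b422} to lift the isomorphism $\Spec(Z)\simeq\Spec(X)$ to a roof of qc-isomorphisms. You merely fill in details the paper leaves implicit (the algebraicity check via Proposition \ref{prop commutation} and the fractions argument showing both legs of the roof are qc-isomorphisms), which is a welcome elaboration rather than a divergence.
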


\begin{cor}[Cohomology-preservation Lemma]\label{corollary cohomology}
Let $X$ be a schematic space such that $\Spec(X)$ is a scheme. There is a cohomology-preserving equivalence of abelian categories $\Qcoh(X)\simeq \Qcoh(\Spec(X))$.
\begin{proof}
With the notations of Corollary \ref{corollary change model}, we obtain a functor
\begin{align*}
f_*\circ g^*\circ \pi_{Z*}\circ \Psi^*\colon \Qcoh(\Spec(X))\to \Qcoh(X),
\end{align*}
where $\pi_Z\colon \Spec(Z)\to X$ is a continuous map. It is a cohomology-preserving equivalence because all involved morphisms are so.
\end{proof}
\end{cor}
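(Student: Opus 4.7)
The plan is to exploit Corollary \ref{corollary change model} to reduce to the case of a schematic space $Z$ whose restriction maps are of finite presentation, for only then is the centre map $\pi_Z\colon\Spec(Z)\to Z$ continuous in the Zariski topology and $Z$ a classical finite model of the scheme $\Spec(Z)$. In that reduced case, the classical finite-model equivalence between quasi-coherent modules on $Z$ and on its spectrum is induced by an honest morphism of ringed sites, so cohomology preservation is automatic. Everything else is transport of structure through qc-isomorphisms.

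Concretely, first I apply Corollary \ref{corollary change model} to get a span of qc-isomorphisms $Z\xleftarrow{\phi}T\xrightarrow{f}X$ with $Z$ having restrictions of finite presentation, together with the induced scheme isomorphism $\Psi\colon\Spec(Z)\xrightarrow{\sim}\Spec(X)$ (the existence and compatibility of $\Psi$ is a consequence of functoriality of $\Spec$ and Theorem \ref{theorem b422}). Then I propose the composition
\[
\Qcoh(\Spec(X))\xrightarrow{\Psi^*}\Qcoh(\Spec(Z))\xrightarrow{\pi_{Z*}}\Qcoh(Z)\xrightarrow{\phi^*}\Qcoh(T)\xrightarrow{f_*}\Qcoh(X)
\]
as the desired equivalence. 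The outer $\Psi^*$ is a cohomology-preserving equivalence because $\Psi$ is an isomorphism of schemes. The middle $\pi_{Z*}$ is cohomology-preserving because, under the finite presentation hypothesis, $\pi_Z$ is continuous in the Zariski topology: the restriction maps of $Z$ being flat ring epimorphisms of finite presentation make the associated spectra open immersions, so $\pi_Z$ fits into a standard morphism of ringed sites and the usual finite-model theory applies. The two flanking functors $\phi^*$ and $f_*$ are equivalences by the very definition of qc-isomorphism, and preserve cohomology because qc-isomorphisms descend to isomorphisms of the associated locally ringed spaces, where cohomology is ultimately computed.

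The main obstacle, as I see it, is justifying cohomology preservation through $\phi^*$ and $f_*$, since a qc-isomorphism is defined only up to the equivalence relation generated by $\Spec$-isomorphism and need not be well-behaved topologically on the underlying posets. The resolution is to reduce cohomology computations to an affine cover by flat immersions: on each affine piece, quasi-coherent sheaves on a schematic space are equivalent to modules over the global sections ring (by the characterization of affine schematic spaces in Proposition \ref{prop affine characterization}), and higher cohomology vanishes in the affine case. A Čech-type argument along such a cover, matching both sides via the induced isomorphisms on spectra, then yields the required comparison. Once this compatibility is in place, the displayed composition is a composition of cohomology-preserving equivalences of abelian categories, completing the proof.
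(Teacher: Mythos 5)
Your proposal is correct and follows essentially the same route as the paper: the paper's proof is precisely the composite $f_*\circ \phi^*\circ \pi_{Z*}\circ \Psi^*$ built from the span of Corollary \ref{corollary change model} (the paper writes $g^*$ for your $\phi^*$), with cohomology preservation asserted for each factor for the same reasons you give. You simply spell out the justifications---continuity of $\pi_Z$ under the finite-presentation hypothesis and the cohomological invariance of qc-isomorphisms---that the paper leaves implicit.
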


The rest of this section is mostly devoted to prove Theorem \ref{theorem b422}.
\begin{defn}
A locally ringed space $S$ is a \textit{proscheme} if there is some schematic space $X$ and an isomorphism $S\simeq \Spec(X)$. We say that $X$ is a \textit{presentation} of $S$.
\end{defn}

Proschemes are interesting objects on their own, since it can be shown that they are \textit{the most general locally affine "spaces"} for which the locally ringed space structure retains \textit{all} geometric information. They are a generalization of schemes, but in a different sense than étale-theoretic algebraic spaces, for which the above claim fails.

For any schematic space $X$ we have a natural map $\amalg_x U_x\to X$ such that the pullback $\Qcoh(X)\to \prod_x\Qcoh(U_x)\simeq \prod_x\Qcoh(\Spec(U_x))$ is faithfully exact. This map induces
\begin{align*}
\rho\colon\coprod_{x\in X}\Spec(U_x)\to\Spec(X).
\end{align*}
\begin{defn}
Given a proscheme $S\simeq\Spec(X)$, de define the category of \textit{descended quasi-coherent modules} as the fully faithful subcategory
\begin{align*}
\mathbf{dQcoh}(S):=\{\M\in\mathbf{Mod}(S):\rho^*\M\in\Qcoh(\amalg_x\Spec(U_x))\}.
\end{align*}
\end{defn}

\begin{lem}
For any schematic space $X$ with $S\simeq \Spec(X)$, the functor
\begin{align*}
\widehat{(-)}\colon \Qcoh(X)\to \mathbf{dQcoh}(S), & & \M\mapsto \lim_{x\in X}i_{x*}\widetilde{\M_x}
\end{align*}
(where $\widetilde{\M_x}$ is the quasi-coherent module on $\Spec(\OO_{X, x})$ defined by $\M_x$ and $i_x\colon\Spec(U_x)\to \Spec(X)$ is the natural flat monomorphism) defines an equivalence of categories. In particular, $\mathbf{dQoh}(S)$ is independent of the chosen	 presentation of $S$.
\begin{proof}
The quasi-inverse functor maps $\mathcal{N}\in \mathbf{dQcoh}(S)$ to $\mathcal{N}_X$ such that $\mathcal{N}_{X, x}:=\mathcal{N}(i_x(\Spec(U_x)))\simeq (i_x^*\mathcal{N})(\Spec(U_x))$.
\end{proof}
\end{lem}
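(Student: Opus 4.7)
The plan is to verify that $\widehat{(-)}$ is an equivalence by constructing the claimed quasi-inverse $G\colon \mathbf{dQcoh}(S)\to \Qcoh(X)$, $\mathcal{N}\mapsto \mathcal{N}_X$, and establishing the two natural isomorphisms $G\circ\widehat{(-)}\simeq \mathrm{id}$ and $\widehat{(-)}\circ G\simeq \mathrm{id}$. First I would check that $G$ is well-defined: the value $\mathcal{N}_{X,x}:=\mathcal{N}(i_x(\Spec(U_x)))$ inherits an $\OO_{X,x}$-module structure from the identification $\OO_S(i_x(\Spec(U_x)))\simeq \OO_{X,x}$, and for $y\geq x$ in $X$, the inclusion $U_y\subseteq U_x$ yields $i_y(\Spec(U_y))\subseteq i_x(\Spec(U_x))$, inducing restriction maps $\mathcal{N}_{X,x}\to \mathcal{N}_{X,y}$ compatible with the ring restrictions $\OO_{X,x}\to \OO_{X,y}$. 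Since a quasi-coherent module on a ringed poset is precisely a family of stalk modules with such compatible restrictions, $\mathcal{N}_X\in \Qcoh(X)$.

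Next, I would show that $\widehat{\M}\in \mathbf{dQcoh}(S)$ by computing $i_x^*\widehat{\M}$ directly. Since $i_x$ is a flat monomorphism, the relevant pullbacks of the terms $i_{y*}\widetilde{\M_y}$ are controlled by the fibered products $\Spec(U_x)\times_S \Spec(U_y)$, which by Proposition \ref{prop commutation} coincide with $\Spec(U_x\times_X U_y)$. The schematic condition of Definition \ref{defn schematic} then ensures that these are covered by $\coprod_{t\geq x,y}\Spec(\OO_{X,t})$, and a cofinality argument collapses the limit to $\widetilde{\M_x}$. This simultaneously proves $i_x^*\widehat{\M}\simeq \widetilde{\M_x}$ (hence $\widehat{\M}\in \mathbf{dQcoh}(S)$) and, by taking global sections over $\Spec(U_x)$, that $(\widehat{\M})_X\simeq \M$, establishing one of the triangular identities.

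For the reverse composition, given $\mathcal{N}\in \mathbf{dQcoh}(S)$, both $\widehat{\mathcal{N}_X}$ and $\mathcal{N}$ pull back along each $i_x$ to the same quasi-coherent module $\widetilde{\mathcal{N}_{X,x}}$, with agreement on the fibered products $\Spec(U_x)\times_S \Spec(U_y)$ guaranteed by the schematic structure. Faithful exactness of $\rho^*$ on $\mathbf{dQcoh}(S)$ then promotes this local agreement to a canonical global isomorphism $\widehat{\mathcal{N}_X}\simeq \mathcal{N}$. Independence of the presentation is then formal: any two presentations $X, X'$ of $S$ yield categories $\Qcoh(X)$ and $\Qcoh(X')$ both canonically equivalent to $\mathbf{dQcoh}(S)$ via the same construction, hence to each other.

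The main obstacle is the identification $i_x^*\widehat{\M}\simeq \widetilde{\M_x}$, which is where the schematic condition enters essentially: it is precisely the faithful flatness of $\OO_{X,x}\otimes_{\OO_{X,z}}\OO_{X,y}\to \prod_{t\geq x,y}\OO_{X,t}$ that allows one to identify the contribution of the index $y$ to the limit with a product indexed by $t\geq x, y$, which then collapses by cofinality since $t=x$ is such an index whenever $y\geq x$ (the general case reducing to this via the poset structure). Every other step amounts to routine bookkeeping once this identification is in place.
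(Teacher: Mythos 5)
Your proposal is correct and takes essentially the same route as the paper, whose entire proof consists of exhibiting the same quasi-inverse $\mathcal{N}\mapsto\mathcal{N}_X$ with $\mathcal{N}_{X,x}:=(i_x^*\mathcal{N})(\Spec(U_x))$ and leaving the verifications implicit. The descent and cofinality details you supply (reducing $i_x^*\widehat{\M}$ to $\widetilde{\M_x}$ via Proposition \ref{prop commutation} and the faithful-flatness condition of Definition \ref{defn schematic}) are exactly the bookkeeping the paper omits.
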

\begin{cor}\label{corollary useless}
If $S\simeq \Spec(X)$ is a scheme, $$\Qcoh(X)\simeq \mathbf{dQcoh}(S)\simeq \Qcoh(S).$$
\begin{proof}
This follows from fpqc descent for schemes, \cite[Theorem 03O8]{stacks}.
\end{proof}
\end{cor}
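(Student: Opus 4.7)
The plan is to combine the preceding Lemma with classical fpqc descent for schemes. The first equivalence $\Qcoh(X) \simeq \mathbf{dQcoh}(S)$ is handed to us directly by the preceding Lemma applied to the given presentation $X$, so I would reduce the entire task to identifying $\mathbf{dQcoh}(S)$ with $\Qcoh(S)$ as full subcategories of $\mathbf{Mod}(S)$ under the extra hypothesis that $S$ is a scheme. Unwinding the definitions, this amounts to showing that an $\OO_S$-module $\M$ is quasi-coherent in the ordinary scheme-theoretic sense if and only if $\rho^*\M$ is quasi-coherent on $\coprod_x \Spec(U_x)$.

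The key geometric input will be that $\rho\colon \coprod_{x \in X} \Spec(U_x) \to S$ is an fpqc covering of schemes. By construction $\{U_x \to X\}_{x \in X}$ is a finite faithfully flat family of flat immersions of schematic spaces; applying $\Spec$, which commutes with finite fibered products by Proposition \ref{prop commutation} and preserves flatness stalk-wise, each factor $\Spec(U_x) \to S$ becomes a flat morphism of schemes, and faithful flatness of the covering translates into surjectivity of $\rho$. Since each $U_x$ is an affine schematic space with $\Spec(U_x) \simeq \Spec(\OO_{X,x})$, the source of $\rho$ is a finite disjoint union of affine schemes, hence quasi-compact, so $\rho$ is indeed fpqc.

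With this in hand, I would invoke the standard fact that being quasi-coherent is local for the fpqc topology on schemes (\cite[Tag 03O8]{stacks}): an $\OO_S$-module is quasi-coherent exactly when its pullback along any fpqc cover is quasi-coherent. Since the pullback condition along $\rho$ is precisely the membership condition defining $\mathbf{dQcoh}(S)$, this yields $\mathbf{dQcoh}(S) = \Qcoh(S)$ inside $\mathbf{Mod}(S)$, completing the chain of equivalences.

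The only real obstacle I anticipate is confirming that $\rho$ is genuinely fpqc in the scheme-theoretic sense; this is where the hypothesis $\Spec(X) \simeq S$ being a scheme is essential, ensuring that the affine pieces $\Spec(U_x)$ glue along honest flat monomorphisms of schemes so that scheme-theoretic descent applies without modification. Beyond this geometric check, no further work is required.
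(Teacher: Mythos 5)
Your proposal is correct and follows essentially the same route as the paper: the first equivalence comes from the preceding Lemma, and the identification $\mathbf{dQcoh}(S)\simeq\Qcoh(S)$ is obtained by recognizing $\rho\colon\coprod_x\Spec(U_x)\to S$ as an fpqc cover of schemes and invoking fpqc descent of quasi-coherence (\cite[Theorem 03O8]{stacks}), exactly the reference the paper cites. You merely spell out the verification that $\rho$ is fpqc, which the paper leaves implicit.
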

\begin{claim}
It can be shown that, for \textit{any} $X$, the functor $\widehat{(-)}$ restricts to an equivalence $\Coh(X)\simeq \Coh(S)$. Actually, $\Coh(S)\subseteq \mathbf{dQcoh}(S)$ is a subcategory of compact generators, hence every descended quasi-coherent module is a filtered colimit of coherent modules. 
\end{claim}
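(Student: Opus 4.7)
The plan is to leverage the equivalence $\widehat{(-)}\colon \Qcoh(X) \simeq \mathbf{dQcoh}(S)$ from the preceding Lemma, together with the finite structure of $X$ and the flatness of its restriction morphisms, to descend both assertions to the coherent subcategories. Here $\Coh(S)$ is characterized as those $\mathcal{N} \in \mathbf{dQcoh}(S)$ for which $i_x^*\mathcal{N}$ is coherent on $\Spec(U_x) \simeq \Spec(\OO_{X, x})$ for every $x \in X$, a definition made consistent by faithfully flat descent of coherence along the cover $\rho$.

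For the equivalence $\Coh(X) \simeq \Coh(S)$, one unwinds the formula $(i_x^* \widehat{\M})(\Spec(U_x)) \simeq \widetilde{\M_x}$: coherence on an affine scheme amounts to finite presentation of the underlying module, so $\widehat{\M}$ belongs to $\Coh(S)$ precisely when each $\M_x$ is a finitely presented $\OO_{X, x}$-module, which is the defining property of $\M \in \Coh(X)$. For compact generation, I would transport the statement through $\widehat{(-)}$ and prove directly that $\Coh(X)$ is a family of compact generators of $\Qcoh(X)$, in two steps. \emph{Compactness:} $\Hom_{\Qcoh(X)}(\F, -)$ factors as a finite limit indexed by $X$ of stalkwise functors $\Hom_{\OO_{X, x}}(\F_x, -_x)$; each such functor commutes with filtered colimits because $\F_x$ is finitely presented, and finite limits commute with filtered colimits in abelian groups. \emph{Generation:} given $\M \in \Qcoh(X)$, I would construct a filtered system of coherent subsheaves exhausting $\M$ by an inductive procedure on the finite poset. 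Starting from finitely presented submodules $F_x \subseteq \M_x$, one extends them using the flat ring epimorphisms $r_{xy}\colon \OO_{X, x} \to \OO_{X, y}$, at each step replacing $F_y$ by a finitely presented submodule of $\M_y$ containing both $r_{xy}(F_x)$ and the previously chosen $F_y$; the compatibility required at triples $z \leq x, y$ is controlled by the schematic axiom of Definition \ref{defn schematic}. Exhausting each stalk by such choices realizes $\M$ as the desired filtered colimit.

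Once compactness and generation of $\Coh(X)$ in $\Qcoh(X)$ are established, the conclusion that every descended quasi-coherent module is a filtered colimit of coherent modules follows by standard arguments for Grothendieck abelian categories with a generating family of compact objects. The principal obstacle lies in the generation step: in the non-Noetherian setting the inductive extension of finitely presented submodules along the poset cannot be reduced to Zariski-local constructions, and one must rely carefully on the behaviour of finitely presented modules under flat ring epimorphisms together with the fibered-product axiom of Definition \ref{defn schematic} to ensure global compatibility.
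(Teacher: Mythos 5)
The paper offers no proof of this Claim (it is asserted with ``it can be shown that\dots''), so there is nothing to compare against and I am judging your argument on its own terms. The first half of your proposal is essentially sound: characterizing $\Coh(S)$ by coherence of the restrictions $i_x^*\mathcal{N}$, unwinding $\widehat{(-)}$ stalk by stalk, and the compactness argument --- writing $\Hom_{\Qcoh(X)}(\F,-)$ as a finite limit of the functors $\Hom_{\OO_{X,x}}(\F_x,-)$ and using that filtered colimits of quasi-coherent modules are computed stalkwise --- all go through, modulo the caveat that ``coherent $=$ finitely presented'' on an affine scheme requires the stalk rings to be coherent or Noetherian, a point you elide.

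The genuine gap is in the generation step. You propose to exhaust $\M$ by a filtered system of coherent \emph{subsheaves}, built from finitely presented submodules $F_x\subseteq\M_x$. Outside the Noetherian setting this cannot work, because a module need not contain any nonzero finitely presented submodule: for $R=k[x_1,x_2,\dots]$ and $M=R/(x_1,x_2,\dots)$ the only submodules are $0$ and $M$, and $M$ is not finitely presented, so $M$ is not a filtered union of finitely presented submodules (it \emph{is} a filtered colimit of the finitely presented modules $R/(x_1,\dots,x_n)$, but with non-injective transition maps). Since the Claim is asserted for \emph{any} $X$, the subobject-exhaustion strategy must be replaced by the construction of a filtered diagram of coherent modules mapping to $\M$, which is a different and harder bookkeeping problem on the poset. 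Even granting Noetherian stalks, your inductive step is incomplete as stated: a family of submodules closed under the restrictions defines a subsheaf, but for it to be a \emph{quasi-coherent} subsheaf one needs the equality $F_y=F_x\otimes_{\OO_{X,x}}\OO_{X,y}$ for every $x\leq y$, not merely the containment $r_{xy}(F_x)\subseteq F_y$; arranging this simultaneously for all pairs below a given $y$, and showing that the resulting saturation over the finite poset still has finitely presented stalks, is precisely the content of the claim and is not supplied by the flatness of the $r_{xy}$ or the axiom of Definition \ref{defn schematic} alone. This is the step your proof would actually have to fight for.
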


As explained in the introduction of this section, the equivalence of Corollary \ref{corollary useless} fails to guarantee preservation of cohomology, so we will recover it in a different way: finding qc-isomorphisms relating $X$ to a \textit{finite model} $Z$ of $S$.
\begin{defn}
A morphism of locally ringed spaces $g\colon S\to T$ between proschemes $S$ and $T$ is said to be \textit{schematic} if $g_*$ preserves descended quasi-coherent modules.
\end{defn}

Proschemes and their schematic morphisms define a category denoted
\begin{align*}
\mathbf{PSch}.
\end{align*}

\begin{defn}\label{definition proschemes}
A schematic morphism $g\colon \Spec(X)\to \Spec(Y)$ is said to be \textit{algebraic} if for every schematic space $Z$ such that $\Spec(Z)$ is a scheme and any $Z\to Y$, the base change $\Spec(Z\times_YX)\simeq \Spec(Z)\times_{\Spec(Y)}\Spec(X)$ is a qc-qs scheme. A proscheme $S$ is \textit{algebraic} if $S\to S\times S$ is algebraic.
\end{defn}
\textit{A priori}, this definition requires fixing presentations for our spaces---but not for morphisms!---and it shall be understood as \textit{a morphism is algebraic if there is some presentation of its source and target spaces that verifies the condition}, but Theorem \ref{theorem b422} will prove that is independent of it. Since isomorphisms are always algebraic---trivially so!---, one still readily checks that algebraic proschemes and their algebraic schematic spaces define a subcategory
\begin{align*}
\mathbf{AlgPSch}\subseteq \mathbf{PSch}
\end{align*}
and, since the $\Spec$ functor commutes with finite fibered products (Proposition \ref{prop commutation}), that there is a functor
\begin{align*}
\Spec\colon \mathbf{AlgSchFin}_{\qc}\to \mathbf{AlgPSch}.
\end{align*}
Now we prove two technical Lemmas and, with them, Theorem \ref{theorem b422}.
\begin{lem}[Technical Lemma 1]\label{lemma tech 1}
Let $S$ be an \textit{affine scheme} and let $\{f_i\colon V_i\to S\}$ a finite covering by flat monomorphisms such that, for every $s\in S$, the fibered product $V^s:=\prod_{s\in V_i}V_i$---over $S$---is affine. Then, the topological finite model $X$ with respect to the images of this covering---that are not Zariski-open!---, equipped with a Zariski-non-continuous map $\pi_S\colon S\to X$ and with the sheaf of rings $\OO_{X, \pi_S(s)}:=\OO_{V^s}(V^s)$ is an affine schematic space such that $S\simeq \Spec(X)$. 
\begin{proof}
First, given any flat monomorphism of affine schemes $i\colon V\to S$, since faithfully flat monomorphisms are isomorphisms, any flat monomorphism with image $i(V)$ will share global sections with $V$. In particular, it makes sense to define $\OO_S(V):=\OO_V(V)$---this coincides with the limit of the sections over all Zariski-open neighborhoods of $i(V)$---and, thus, we shall identify these flat monomorphisms with their images. Now, the key to this proof is the additional hypothesis on the covering: given a covering $\{W_k\to V\}$ by flat monomorphisms and such every intersection $W_k\times_S W_{k'}$ is covered by a finite number of $W_{k''}$, the sheaf condition translates to $\OO_S(V)\simeq \lim\OO_S(W_{k''})$. In our context,  $\{V^s\to S\}_{s\in S}$---for a finite number of $s\in S$---plays this role.

Define the "finite model" $X$ as in the statement. By definition of $\Spec(X)$, we have a map $f\colon \Spec(X)\to S$ of locally ringed spaces, surjective and such that $f^{-1}(V^s)=\pi_X^{-1}(U_{\pi_S(s)})=\Spec(U_{\pi_S(s)})$. Note that $X$ is clearly pseudoschematic, because base changes of flat ring epimorphisms are stable under base changes, and that the composition $\pi_S\circ f\colon \Spec(X)\to X$ is surjective. By Theorem \ref{theorem characterization schematic} we only need to prove that $f$ is an isomorphism. 

First, $f$ is an homeomorphism by faithfully flat descent for schemes, see \cite{stacks}[Lemma 02JY], but by the reasons discussed above, we also know that $\OO_S(S)\simeq \lim_s\OO_S(V^s)\simeq\OO_{\Spec(X)}(\Spec(X))$, and by the universal property of the prime spectrum
\begin{align*}
\Hom_{\mathbf{LRS}}(\Spec(X), \Spec(\OO_S(S)))\simeq \Hom_{\mathbf{CRing}}(\OO_S(S), \OO_{\Spec(X)}(\Spec(X))
\end{align*}
we conclude that $f$ is isomorphic to the natural affinization morphism in $\mathbf{LRS}$, namely $\Spec(X)\to \Spec(\OO_{\Spec(X)}(\Spec(X))$, which is isomorphic to an homeomorphism, and thus necessarily an isomorphism. 
\end{proof}
\end{lem}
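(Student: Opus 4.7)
The approach is to construct $X$ explicitly, produce a morphism of locally ringed spaces $f\colon\Spec(X)\to S$, and verify that $f$ is an isomorphism. First, I would define the underlying poset of $X$ by equivalence classes of points $s\sim s'$ iff $\{i:s\in V_i\}=\{i:s'\in V_i\}$, with the order $[s]\leq[s']$ iff $\{i:s\in V_i\}\subseteq\{i:s'\in V_i\}$ (equivalently, $V^{s'}\subseteq V^s$), and equip each class with the prescribed stalk $\OO_{V^s}(V^s)$. Since each inclusion $V^{s'}\subseteq V^s$ is a flat monomorphism between affine schemes, the induced map $\OO_{V^s}(V^s)\to\OO_{V^{s'}}(V^{s'})$ is a flat ring epimorphism, so $X$ is pseudoschematic.

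To upgrade to schematic in the sense of Definition \ref{defn schematic}, for $x=\pi_S(s)$, $y=\pi_S(s')$, $z=\pi_S(s'')$ with $x,y\geq z$, one needs faithful flatness of $\OO_{X,x}\otimes_{\OO_{X,z}}\OO_{X,y}\to\prod_{t\geq x,y}\OO_{X,t}$. Because $V^{s''}\to S$ is a flat monomorphism containing both $V^s$ and $V^{s'}$, this tensor product computes the global sections of $V^s\times_{V^{s''}}V^{s'}\simeq V^s\cap V^{s'}$, and the claim reduces to the assertion that every $u\in V^s\cap V^{s'}$ lies in some $V^t$ with $\pi_S(t)\geq x,y$, which is immediate from the definition (take $t=u$). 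This yields a finite covering by flat monomorphisms and hence the required faithful flatness. At this stage $\Spec(X)=\colim_s V^s$ and the compatible family $\{V^s\to S\}$ induces a canonical $f\colon\Spec(X)\to S$ of locally ringed spaces.

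Showing $f$ is an isomorphism proceeds in two steps. Topologically, $\{V^s\to S\}_s$ is a finite surjective family of flat monomorphisms into an affine scheme, so faithfully flat descent for the underlying topological space (e.g.\ \cite{stacks}[Lemma 02JY]) gives that $f$ is a homeomorphism. For the structure sheaves, the crucial computation is the identity
\[\OO_S(S)\;\simeq\;\lim_s\OO_S(V^s)\;\simeq\;\lim_s\OO_{V^s}(V^s)\;\simeq\;\OO_{\Spec(X)}(\Spec(X)),\]
after which $f$ matches the natural affinization morphism $\Spec(X)\to\Spec(\OO_{\Spec(X)}(\Spec(X)))\simeq S$; a homeomorphism between affine-looking locally ringed spaces that agrees on global sections is necessarily an isomorphism. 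Affineness of $X$ then follows from Proposition \ref{prop affine characterization}.

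The hard part is establishing the sheaf identity $\OO_S(S)\simeq\lim_s\OO_S(V^s)$ above: this is not a Zariski sheaf condition but a sheaf property in the finer topology of flat monomorphisms, and it relies decisively on the hypothesis that each fibered intersection $V^s$ is affine together with the finiteness of the cover. Without affineness one cannot even identify $\OO_S(V^s)$ with the ring $\OO_{V^s}(V^s)$ in a meaningful way; with it, faithfully flat descent for the affine case reduces the identity to a concrete finite limit of rings which matches both the affine structure on $S$ and the colimit description of $\Spec(X)$, closing the argument.
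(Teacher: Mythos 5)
Your proposal is correct and follows essentially the same strategy as the paper: construct the ringed poset $X$, produce $f\colon\Spec(X)\to S$, show $f$ is a homeomorphism by faithfully flat descent, and then identify $f$ with the affinization morphism via the global-sections identity $\OO_S(S)\simeq\lim_s\OO_S(V^s)\simeq\OO_{\Spec(X)}(\Spec(X))$, which is exactly where the affineness hypothesis on the $V^s$ enters in both arguments. The one place you diverge is in establishing schematicity: you verify Definition~\ref{defn schematic} directly, by identifying $\OO_{X,x}\otimes_{\OO_{X,z}}\OO_{X,y}$ with the global sections of the affine scheme $V^s\times_S V^{s'}$ and observing that the finitely many $V^u$ with $u\in V^s\cap V^{s'}$ form a jointly surjective family of flat monomorphisms into it; the paper instead routes everything through the centre-map characterization of Theorem~\ref{theorem characterization schematic}, so that schematicity falls out automatically once $f$ is shown to be an isomorphism (since $\pi_S\circ f$ is then the required centre map). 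Your version costs an extra computation but has the virtue of making the role of the affine-intersections hypothesis visible twice, once for schematicity and once for the sheaf identity; the paper's version is more economical. The appeal to Proposition~\ref{prop affine characterization} at the end for affineness of $X$ is a fine way to close, equivalent to the paper's implicit conclusion.
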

\begin{lem}[Technical Lemma 2]\label{lemma tech 2}
Given a qc-qs scheme $S$ and a finite covering by flat monomorphisms $\{W_i\to S\}$ with $W_i$ qc-qs schemes, there is a finite covering by flat monomorphisms verifying the local affinity condition of Lemma \ref{lemma tech 1} and refining $\{W_i\to S\}$---in the sense of the intersections of neighborhoods of each point of $S$---.
\begin{proof}
It is enough to consider a finite and locally affine open cover $\{U_j^i\}_j$ of each $W_i$---which exists because the $W_i$ are qc-qs---and take $\{U_j^i\to S\}_{i, j}$. If $S$ is semiseparated---affine diagonal---, then this cover is automatically locally affine. In general, the intersections $U_j^i\times_SU_{j'}^{i'}\to S$ are again quasi-compact, so each one of them can be covered by a finite number of affine opens $V^{iji'j'}_k$. Since intersections of affine open subschemes within affine schemes are affine, a straightfoward check proves that the cover $\{V^{iji'j'}_k, U_j^i\to S\}$ verifies the desired condition.
\end{proof}
\end{lem}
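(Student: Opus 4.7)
The plan is to refine $\{W_i\to S\}$ in two stages until the local affinity condition of Lemma \ref{lemma tech 1} is satisfied. As a first step, since each $W_i$ is qc-qs, it admits a finite affine open cover $\{U_j^i\}_{j\in J_i}$. Composing with $W_i\to S$, each $U_j^i\to S$ is a flat monomorphism with affine source, and $\{U_j^i\to S\}_{i,j}$ is a finite cover of $S$ by flat monomorphisms refining the original: each member is contained in some $W_i$, so the intersection of neighborhoods in the new cover around any $s$ factors through the intersection in the old one.

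If $S$ happens to be semiseparated, this step already suffices: the affine diagonal makes every $S$-fibered product of affine schemes affine, and the local affinity condition follows directly. In the general case I would proceed by considering the pairwise fibered products $U_j^i\times_S U_{j'}^{i'}$. These are quasi-compact schemes (both factors are quasi-compact over $S$), so each admits a finite affine open cover $\{V^{iji'j'}_k\}_k$. The enlarged family $\{U_j^i\}\cup\{V^{iji'j'}_k\}$ is still a finite cover of $S$ by flat monomorphisms with affine source, and remains a refinement of $\{W_i\to S\}$ because each $V^{iji'j'}_k$ projects to $U_j^i\subseteq W_i$.

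The substantive step---and the one I expect to be the main obstacle---is verifying the local affinity condition for this enlarged cover: for every $s\in S$, the $S$-fibered product $V^s$ of all members whose image contains $s$ must be affine. My strategy is to exploit the monomorphism property of flat monomorphisms, so that whenever $A\to S$ factors through a flat monomorphism $B\to S$ the base change $A\times_S B\simeq A$. Fixing an \emph{anchor} $U_{j_0}^{i_0}$ containing $s$, for each other $U_{j'}^{i'}$ containing $s$ one picks some affine $V^{i_0j_0 i' j'}_k$ also containing $s$, which by construction factors through both $U_{j_0}^{i_0}$ and $U_{j'}^{i'}$. Using these affines to absorb the $U_{j'}^{i'}$-factors in $V^s$, one reduces $V^s$ to an $S$-fibered product of affine schemes all factoring through the affine scheme $U_{j_0}^{i_0}$. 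By the monomorphism observation this is equal to the corresponding fibered product \emph{over} $U_{j_0}^{i_0}$, i.e.\ an intersection of affine open subschemes inside an affine scheme, and hence affine. Should one round of pairwise refinement prove insufficient in some edge case, the procedure can be iterated finitely many times, with termination guaranteed by the qc-qs hypotheses.
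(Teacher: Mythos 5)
Your construction of the refined cover is exactly the paper's: a finite affine open cover $\{U_j^i\}$ of each $W_i$, together with finite affine open covers $\{V^{iji'j'}_k\}$ of the pairwise products $U_j^i\times_S U_{j'}^{i'}$. The gap is in your verification of the local affinity condition. After you absorb the factors $U_{j'}^{i'}$, the product $V^s$ still contains \emph{every} $V^{iji'j'}_k$ whose image contains $s$, not only the ones attached to your anchor; and a factor $V^{iji'j'}_k$ with $(i,j),(i',j')\neq(i_0,j_0)$ need not factor through $U_{j_0}^{i_0}$. Indeed, a factorization over $S$ through the flat monomorphism $U_{j_0}^{i_0}\to S$ forces the image of $V^{iji'j'}_k$ to be contained in that of $U_{j_0}^{i_0}$, and already for three Zariski opens $U_1,U_2,U_3$ containing $s$ with anchor $U_3$, an affine open of $U_1\cap U_2$ has no reason to sit inside $U_3$. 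So the assertion that the surviving factors all factor through the anchor is false, and the reduction of $V^s$ to a fibered product over $U_{j_0}^{i_0}$ does not go through as written. The closing hedge about iterating the refinement does not repair this: each iteration adds new members, hence new factors to $V^s$, and no termination argument is given (nor is one needed---one round suffices).

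The check closes with one more grouping step, still in the spirit of your absorption idea. For fixed $(i,j)$ with $s\in U_j^i$, every factor $V^{iji'j'}_k$ of $V^s$ with that index factors through the affine scheme $U_j^i$, so the fibered product over $S$ of all of them equals the fibered product over $U_j^i$ and is therefore affine; call it $A_{ij}$. Now $A_{ij}$ projects onto some $V^{iji'j'}_k$ for each $(i',j')$ with $s\in U_{j'}^{i'}$, hence factors through \emph{every} such $U_{j'}^{i'}$---in particular through the single anchor $U_{j_0}^{i_0}$. Consequently the product of the $A_{ij}$ over $S$ equals their product over the affine scheme $U_{j_0}^{i_0}$ and is affine, and by your absorption lemma this product is isomorphic to $V^s$. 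With this correction your argument coincides with the ``straightforward check'' the paper leaves implicit.
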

\begin{proof}[Proof of Theorem \ref{theorem b422}]
The functor $\Spec$ is faithful by maximality of the localization by qc-isomorphisms and essentially surjective by definition of algebraicity of proschemes. There only remains to check that it is full. Let $g\colon \Spec(X)\to \Spec(Y)$ be an algebraic schematic morphism and assume that $\Spec(X)$ is an affine scheme. Let $\{\Spec(U_y)\}_{y\in Y}$ the natural cover of $\Spec(Y)$ by flat monomorphisms and let $\{g^{-1}(\Spec(U_y)\to \Spec(X)\}$ denote its pullback, which is a cover by flat monomorphisms of locally ringed spaces such that $g^{-1}(\Spec(U_y))$ are qc-qs schemes for all $y$ by the algebraicity hypothesis.

By Lemma \ref{lemma tech 2} we find a locally affine open cover of $\Spec(X)$ refining $\{g^{-1}(\Spec(U_y))\}_y$ and, thus, obtain a finite model $\Spec(X)\to Z$, which is schematic by Lemma \ref{lemma tech 1}, and schematic morphisms
\begin{align*}
(\star, \OO_X(X))\leftarrow Z\to Y.
\end{align*}
Taking $Z':=Z\times_{(\star, \OO_X(X))}X$ we obtain $f/\phi\colon X\leftarrow Z'\to Y$ verifying $\Spec(f/\phi)\simeq g$ as desired. \textit{Note that we have not used schematicity of $g$; this proves that such an algebraic morphism is automatically schematic}.

Now, for a general $\Spec(X)$, since $X\simeq \colim_x U_x$ as ringed spaces in $\mathbf{AlgSchFin}_{\qc}$ and $\Spec(X):=\colim_x\Spec(U_x)$ as locally ringed spaces by definition and in $\mathbf{AlgPSch}$, by the universal property of colimits and the affine case discussed above, we have a chain of bijections
\begin{align*}
&\Hom(X, Y)\simeq \Hom(\colim_x U_x, Y)\simeq \lim_x\Hom(\Spec(U_x), \Spec(Y))\simeq \\&\simeq \Hom(\colim_x\Spec(U_x), \Spec(Y))\simeq \Hom(\Spec(X), \Spec(Y)),
\end{align*}
which concludes the proof.
\end{proof}

\section{Finiteness of cohomology}

We assume that all spaces are Noetherian. We denote by $D_c(X)$ the derived category of complexes of modules with coherent cohomology on $X$. We want to prove finiteness of cohomology schematic spaces. A proof via finite versions of Chow's Lemma and Dévissage can be constructed, but our intention is to show it by means of  the classic result for scheme theory. The argument is a simple one of local nature enabled by our work so far:

\begin{thm}\label{theorem main}
If $f\colon X\to Y$ is a pro-locally proper \textit{or} algebraically proper schematic morphism, its derived direct image preserves coherence, that is, $\mathbb{R}f_*\colon \mathbf{Coh}(X)\to D_c(Y)$.
\begin{proof}
It suffices to see that, for all $i\geq 0$ and any coherent module $\M$ on $X$, $R^if_*\M$ is coherent, i.e. that its stalks are finitely generated---because it is quasi-coherent by schematicity as defined in \cite{fernando schemes}---. If $f$ verifies either of the hypothesis, since qc-isomorphisms preserve both categories of quasi-coherent modules and their cohomology, we can assume that $f$ is a scheme fibration and such that $f^y\colon\Spec(f^{-1}(U_y))\to\Spec(U_y)$ are proper morphisms of schemes for all $y\in Y$. Now, by Corollary \ref{corollary cohomology}, we have
\begin{align*}
&(R^if_*\M)_y\simeq H^i(f^{-1}(U_y), \M_{|U_y})\simeq H^i(\Spec(f^{-1}(U_y)), \widehat{\M_{|U_y}})\simeq \\ & \simeq (R^if^y_*\widehat{\M_{|U_y}})(\Spec(U_y)),
\end{align*}
---where $\widehat{(-)}$ here denotes the functor of Corollary \ref{corollary cohomology}, which can be seen to coincide with the one we defined \textit{ad hoc} and is compatible with restrictions, although such facts are not needed here---which is finitely-generated by Grothendieck's finiteness theorem for proper morphisms of schemes.
\end{proof}
\end{thm}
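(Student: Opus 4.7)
The plan is to reduce the assertion to Grothendieck's classical finiteness theorem for proper morphisms of Noetherian schemes via a stalk-wise argument, using the cohomology-preservation equivalence of Corollary \ref{corollary cohomology} as the bridge between the schematic and scheme-theoretic worlds. Since the derived direct image of a coherent module under a schematic morphism is quasi-coherent (by the cohomological characterization of schematic morphisms in \cite{fernando schemes}), it suffices to show that for every coherent $\M$ on $X$, every $i \geq 0$, and every $y \in Y$, the stalk $(R^i f_* \M)_y$ is a finitely generated $\OO_{Y,y}$-module.

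First I would localize to the principal open $U_y \subseteq Y$ and use flat base change along the flat monomorphism $U_y \hookrightarrow Y$ to identify $(R^i f_* \M)_y$ with $H^i(f^{-1}(U_y), \M_{|f^{-1}(U_y)})$. Then I would use the hypotheses to replace the restricted morphism by something scheme-theoretic. In the pro-locally proper case, Definition \ref{defn prolocally P} combined with the qc-invariance of quasi-coherent cohomology lets me pass to a qc-equivalent presentation $f' \colon X' \to Y'$ of $f$ such that $\Spec((f')^{-1}(U_{y'})) \to \Spec(U_{y'})$ is a proper morphism of schemes for each $y' \in Y'$. In the algebraically proper case, applying Definition \ref{definition algebraic schematic} with the test morphism $Z := U_y \to Y$ (whose spectrum is an affine scheme) directly produces that $\Spec(U_y \times_Y X) \to \Spec(U_y)$ is a proper morphism of Noetherian schemes.

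Now Corollary \ref{corollary cohomology} converts the schematic cohomology into scheme cohomology:
\begin{align*}
H^i(f^{-1}(U_y), \M_{|f^{-1}(U_y)}) \simeq H^i\bigl(\Spec(f^{-1}(U_y)), \widehat{\M_{|f^{-1}(U_y)}}\bigr),
\end{align*}
where the right-hand side, by the standard identification of cohomology with sections of higher direct images over an affine base, computes $\bigl(R^i g_* \widehat{\M_{|f^{-1}(U_y)}}\bigr)(\Spec(U_y))$ for $g \colon \Spec(f^{-1}(U_y)) \to \Spec(U_y)$. Grothendieck's finiteness theorem applied to the proper morphism $g$ of Noetherian schemes yields coherence of the higher direct image, hence finite generation of its sections over the affine base. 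This gives the required finiteness of $(R^i f_* \M)_y$.

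The principal obstacle is bookkeeping with qc-equivalence and the Cohomology-preservation Lemma. Concretely, one must verify that the equivalence $\Qcoh(X) \simeq \Qcoh(\Spec(X))$ provided by Corollary \ref{corollary cohomology} is compatible with restriction to principal opens $U_y$ and with the identification of stalks of higher direct images as cohomology on such opens; and that, in the pro-locally proper case, the whole chain of identifications descends through the qc-equivalence between $f$ and $f'$. Once these compatibilities are in hand (and they follow formally from the construction of $\widehat{(-)}$ via the maps $i_x \colon \Spec(U_x) \to \Spec(X)$ together with the fact that qc-isomorphisms induce equivalences on $\Qcoh$ preserving cohomology), the proof becomes a direct application of the classical scheme-theoretic statement, exactly as the informal strategy above suggests.
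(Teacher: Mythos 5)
Your proposal is correct and follows essentially the same route as the paper's proof: reduce to finite generation of stalks (quasi-coherence coming from schematicity), pass via qc-equivalence to a scheme fibration whose restrictions over the $U_y$ are proper morphisms of schemes, identify $(R^if_*\M)_y$ with scheme cohomology through the Cohomology-preservation Lemma, and conclude by Grothendieck's classical finiteness theorem. The only difference is that you spell out the two hypotheses (pro-locally proper versus algebraically proper) separately, which the paper compresses into a single sentence.
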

 
In a similar manner, one can also use this technique to prove schematic generalizations for the standard consequences of this finiteness result, for example a Formal Functions Theorem or a Stein's Factorization Theorem for pro-locally proper morphisms. For the former, we recommend following the construction of completions given in \cite{fernando completaciones}, which greatly simplifies things in the schematic setting; for the latter, one has to introduce notions of geometric connectedness fibers, but that is relatively straightfoward using the theory introduced in \cite{paper grupo etale}
. We omit them for the time being because they contain essentially no novel ideas. Stein Factorization can then be used to prove a relative homotopy exact sequence for the étale fundamental group of schematic spaces, as introduced in \cite{paper grupo etale}.
\medskip

Note that our result slightly generalizes the classic one for schemes to a topology \textit{a priori} weaker than the Zariski topology: to pro-locally proper morphisms of schemes as in Definition \ref{prolocal schemes}
\begin{cor}
If $g\colon S\to T$ is a pro-locally proper morphism of Noetherian schemes, then $R^ig_*$ preserves coherence for all $i\geq 0$.
\end{cor}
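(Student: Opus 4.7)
By Definition \ref{prolocal schemes}, $g\colon S\to T$ being pro-locally proper means there exists a pro-locally proper schematic morphism $f\colon X\to Y$ together with isomorphisms of schemes $\Spec(X)\simeq S$ and $\Spec(Y)\simeq T$ identifying $\Spec(f)$ with $g$. Since both $S$ and $T$ are thus of the form $\Spec(-)$ of a schematic space, the cohomology-preserving equivalences $\Qcoh(X)\simeq \Qcoh(S)$ and $\Qcoh(Y)\simeq \Qcoh(T)$ furnished by Corollary \ref{corollary cohomology}, along with their restrictions $\Coh(X)\simeq \Coh(S)$ and $\Coh(Y)\simeq \Coh(T)$ asserted in the Claim preceding it, are at my disposal on both sides. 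The plan is to transport the problem to the schematic setting, invoke Theorem \ref{theorem main}, and transport the answer back.

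Explicitly, given $\mathcal{N}\in \Coh(S)$, let $\M\in\Coh(X)$ be the coherent module with $\widehat{\M}\simeq \mathcal{N}$. Theorem \ref{theorem main} applied to $f$ yields $R^if_*\M\in\Coh(Y)$ for every $i\geq 0$. What remains is to verify that the equivalence $\Coh(Y)\simeq \Coh(T)$ identifies $R^if_*\M$ with $R^ig_*\mathcal{N}$. Since the equivalences of Corollary \ref{corollary cohomology} preserve cohomology, this reduces to the same stalk-wise computation carried out in the proof of Theorem \ref{theorem main}: for each $y\in Y$, letting $t\in T$ denote the corresponding point under $\Spec(Y)\simeq T$, one has
\begin{align*}
(R^ig_*\mathcal{N})_t \simeq H^i\bigl(g^{-1}(U_t), \mathcal{N}|_{g^{-1}(U_t)}\bigr) \simeq H^i\bigl(f^{-1}(U_y), \M|_{f^{-1}(U_y)}\bigr) \simeq (R^if_*\M)_y,
\end{align*}
which is finitely generated as an $\OO_{Y,y}\simeq \OO_{T,t}$-module by the coherence of $R^if_*\M$ on $Y$. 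Combined with the fact that $R^ig_*\mathcal{N}$ is automatically quasi-coherent on the Noetherian scheme $T$, finite generation at every stalk forces coherence.

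The main obstacle is not conceptual but rather notational: one has to keep careful track of the dictionary between schematic spaces and their proscheme spectra set up in Section \ref{section cohomology}, and ensure that cohomology-preservation is applied consistently to both source and target, so that the displayed chain of isomorphisms is genuinely functorial and compatible with restriction to the opens $U_y$ and $g^{-1}(U_t)$. Since the entire stalk-wise argument is essentially a re-reading, in the opposite direction, of the computation already performed inside the proof of Theorem \ref{theorem main}, there is nothing genuinely new to verify beyond invoking the correct equivalences.
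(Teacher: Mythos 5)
Your proposal is correct and is essentially the argument the paper intends: the corollary is stated as an immediate transport of Theorem \ref{theorem main} through the equivalences of Corollary \ref{corollary cohomology}, which is exactly what you do. The only point to tidy is the display with ``$U_t$'': a scheme has no minimal open neighbourhood of $t$, so that step should be read as the identification of $(R^ig_*\mathcal{N})$ restricted along the flat monomorphisms $\Spec(U_y)\to T$ (via flat base change and the fact that coherence is local for this cover) with $R^if^y_*$ of the transported module, i.e.\ the same computation as in the proof of Theorem \ref{theorem main}.
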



\begin{thebibliography}{X}



\bibitem{Gillam} Gillam, W.D. \textit{Localization of ringed spaces}, Advances in Pure Mathematics, Vol. 1, No. 5 (2011), 250-263. doi: 10.4236/apm.2011.15045.

\bibitem{EGA} Grothendieck, A. Dieudonné, J. \textit{Éléments de géométrie algébrique},     Inst. Hautes Études Sci. Publ. Math. (1961-1967).




\bibitem{paper grupo etale} Sánchez González, J.; Tejero Prieto, C. \textit{Étale Covers and Fundamental Groups of Schematic Finite Spaces.} Mediterr. J. Math. 19 (2022), no. 5, 229. 

\bibitem{paper vankampen} Sánchez González, J. \textit{Lax colimits of posets with structure sheaves: applications to descent}

\bibitem{fernando universal} Sánchez González, J. Sancho de Salas, F. \textit{Universal Ringed Spaces.} arXiv:2101.02126 [math.AG].

\bibitem{fernando afines} Sancho de Salas, F.; Sancho de Salas, P. \textit{Affine ringed spaces and Serre's criterion.} Rocky Mountain J. Math. \textbf{47} (2017), no. 6, 2051–2081. 


\bibitem{fernando schemes} Sancho de Salas, F. \textit{Finite spaces and schemes.} J. Geom. Phys. \textbf{122} (2017), 3–27.



\bibitem{fernando completaciones} Sancho de Salas, F. Sancho de Salas, P. \textit{A direct proof of the theorem of formal functions.} Proc. Amer. Math. Soc 137 (2009), no. 12, 4083-4088.

\bibitem{fernando dualidad} Sancho de Salas, F.; Torres Sancho, J.F. \textit{Derived categories of finite spaces and Grothendieck duality.} Mediterr. J. Math. \textbf{17} (2020), no. 3, Paper No. 80, 22 pp. 



\bibitem{notas de pedro} Sancho de Salas, F.; Sancho de Salas, P. \textit{Notes on schematic finite spaces.} arXiv:2102.09263v1 [math.AG].



\bibitem{stacks} De Jong, A. J. et al. \textit{The Stacks Project}, available at
\begin{verbatim}
https://stacks.math.columbia.edu/
\end{verbatim}



\end{thebibliography}
\end{document}